\documentclass[11pt]{amsart}

\usepackage[T1]{fontenc}
\usepackage[utf8]{inputenc}
\usepackage{newtxtext,newtxmath}
\usepackage{microtype}
\usepackage{mathtools}
\usepackage{booktabs}
\usepackage{array}
\usepackage{float}
\usepackage{enumitem}
\usepackage{xcolor}
\usepackage[
  colorlinks=true,
  linkcolor=blue!45!black,
  citecolor=blue!45!black,
  urlcolor=blue!45!black
]{hyperref}
\usepackage[nameinlink,noabbrev,capitalize]{cleveref}

\allowdisplaybreaks[2]
\setlist[enumerate]{leftmargin=2.15em,itemsep=2pt,topsep=4pt}
\setlist[itemize]{leftmargin=2.15em,itemsep=2pt,topsep=4pt}

\newtheorem{theorem}{Theorem}[section]
\newtheorem{proposition}[theorem]{Proposition}
\newtheorem{corollary}[theorem]{Corollary}

\theoremstyle{definition}
\newtheorem{definition}[theorem]{Definition}
\newtheorem{example}[theorem]{Example}
\theoremstyle{remark}
\newtheorem{remark}[theorem]{Remark}

\newcommand{\R}{\mathbb R}
\newcommand{\C}{\mathbb C}
\newcommand{\Z}{\mathbb Z}
\newcommand{\Q}{\mathbb Q}
\newcommand{\Cr}{C_r^*}
\newcommand{\reg}{\mathrm{reg}}
\newcommand{\st}{\mathrm{st}}
\newcommand{\im}{\operatorname{im}}
\newcommand{\indexop}{\operatorname{index}}
\newcommand{\SL}{\operatorname{SL}}
\newcommand{\SO}{\operatorname{SO}}
\newcommand{\SU}{\operatorname{SU}}
\newcommand{\Sp}{\operatorname{Sp}}
\newcommand{\dd}{\,\mathrm d}

\title[L-packet Multiplicity and Integral Structure]
{ L-packet Multiplicity and Integral Structure in the $K$-Theory of Real Inner Forms}
\author{Xinan Dai}
\address{
Key Laboratory for Information Science of Electromagnetic Waves,
College of Future Information and Technology,
Fudan University, Shanghai, China
}
\email{xndai23@m.fudan.edu.cn}
\author{Kuok Fai Chao}
\address{Lui Che Woo College, University of Macau, Macau, China}
\email{kchao@um.edu.mo}
\date{August 3, 2026}

\subjclass[2020]{Primary 46L80, 22E46; Secondary 58J20}
\keywords{real reductive groups, inner forms, discrete series, $L$-packets, orbital integrals, equivariant index, reduced group $C^*$-algebras, character identity}

\hypersetup{
  pdftitle={Stable Packets and Integral Splitting in the K-Theory of Real Inner Forms},
  pdfauthor={Xinan Dai},
  pdfsubject={Stable packet lattices, orbital traces, and integral splitting for real inner forms},
  pdfkeywords={real reductive groups, inner forms, discrete series, L-packets, orbital integrals, K-theory}
}

\begin{document}

\begin{abstract}
Shelstad's discrete-series character identity identifies a finite-dimensional
character of a compact inner form with a signed sum of characters in a L-packet
on an equal-rank real form.  Building on the index-theoretic comparison and
orbital-trace separation theorem of Hochs and Wang, we study the integral
content of this identity in the group
\(K_0(C_r^*G)\).  Two homomorphisms arise naturally:
\[
  \mathcal C_G:K_0(C_r^*G)\longrightarrow R(G_c),
  \qquad
  \mathcal J_G:R(G_c)\longrightarrow K_0(C_r^*G).
\]
The first recognizes a stable elliptic orbital trace as a compact virtual
character; the second sends a compact irreducible representation to the signed
\(K\)-theory class of the corresponding discrete-series L-packet.  We prove
\[
  \mathcal C_G\mathcal J_G=[W_G:W_K]\,\mathrm{id}_{R(G_c)}.
\]
Thus L-packet transfer is integral on the stable L-packet lattice, whereas the
projection from the full \(K\)-group onto that lattice generally requires
inverting the regular L-packet cardinality.  More precisely, if
\(S_G=\operatorname{im}\mathcal J_G\) and
\(U_G=\ker\mathcal C_G\), then
\[
  0\longrightarrow S_G\oplus U_G\longrightarrow K_0(C_r^*G)
  \longrightarrow R(G_c)/[W_G:W_K]R(G_c)\longrightarrow0.
\]
We explain the mechanism first through an elementary lattice model and then
work it out for \(\mathrm{SL}(2,\mathbb R)\) and for symplectic inner forms.  In
rank one the defect is \((\mathbb Z/2\mathbb Z)[s]\); for
\(\mathrm{Sp}(p,q)\) the multiplier is \(\binom{p+q}{p}\), so odd obstruction
primes occur naturally.
\end{abstract}

\maketitle

\section{Introduction}

Let \(G\) be an equal-rank real semisimple group and let \(G_c\) be a compact
inner form.  For a regular Harish--Chandra parameter \(\lambda\), Shelstad's
identity compares the L-packet \(\Pi_\lambda(G)\) with the single compact
representation \(E_\lambda\).  On a common regular elliptic Cartan it has the
form
\begin{equation}\label{eq:intro-shelstad}
  (-1)^{\dim(G/K)/2}
  \sum_{\pi\in\Pi_\lambda(G)}\Theta_\pi(t)
  =\chi_{E_\lambda}(t).
\end{equation}
Here the left-hand side is stable: although the individual discrete-series
characters depend on a choice of member of the L-packet, their signed sum is the
object that persists across inner forms.  Shelstad proved the relevant
character relations for tempered L-packets in \cite[Theorem~6.3]{Shelstad1979};
Hochs and Wang gave an index-theoretic proof of the discrete-series case
\cite[Theorem~2.8]{HochsWang2018Shelstad}.

Equation \eqref{eq:intro-shelstad} is a pointwise equality of character
functions.  Index theory supplies more structure.  A discrete-series
representation determines a class in \(K_0(C_r^*G)\), and its character on the
elliptic set is recovered by orbital traces.  Conversely, the compact
representation \(E_\lambda\) is the equivariant index of a Dolbeault--Dirac
operator on \(G_c/T\).  Hochs and Wang compare this compact index with indices
on \(G/T\), and show that elliptic orbital traces separate
\(K_0(C_r^*G)\) \cite[Corollary~4.1 and Theorem~6.4]{HochsWang2019Orbital}.
These results make it possible to ask an integral question that is not visible
in \eqref{eq:intro-shelstad}:
\begin{quote}
If a compact character is lifted to a L-packet class and then descended again
by stable orbital averaging, do the two operations cancel over \(\mathbb Z\)?
\end{quote}

The answer is no unless the regular L-packet has one member.  The two operations
compose to multiplication by the L-packet cardinality.  The reason is already
visible in rank one.  For \(G=\mathrm{SL}(2,\mathbb R)\), a regular elliptic
element \(t\) and its inverse \(t^{-1}\) lie in the same stable conjugacy class
but in different ordinary conjugacy classes.  A L-packet lift has the same
compact character as its ordinary orbital trace at both points.  Stable
averaging therefore counts that character twice.

The elementary lattice identity behind this phenomenon is
\[
  \mathbb Z^2
  \supset
  \mathbb Z(1,1)\oplus\mathbb Z(1,-1),
  \qquad
  \frac{\mathbb Z^2}
  {\mathbb Z(1,1)\oplus\mathbb Z(1,-1)}\cong\mathbb Z/2\mathbb Z.
\]
Over \(\mathbb Q\), the vector \((1,0)\) splits into its diagonal and
anti-diagonal parts,
\[
  (1,0)=\frac12(1,1)+\frac12(1,-1),
\]
but the factor \(1/2\) is unavoidable over \(\mathbb Z\).  The general theorem
is an infinite-rank and representation-theoretic version of this calculation;
\cref{sec:lattice-model} isolates the model before any analytic machinery is
introduced.

We now state the result.  Let \(G\) be a connected linear real semisimple group
with finite center and a compact Cartan subgroup \(T\).  Let \(K<G\) be a
maximal compact subgroup containing \(T\), and fix a compact inner form
\(G_c\) together with an identification of \(T\) with a maximal torus of
\(G_c\).  Compatible orbital measures are fixed throughout.  Set
\[
  W_G=W(G_{\mathbb C},T_{\mathbb C})\cong N_{G_c}(T)/T,
  \qquad
  W_K=N_K(T)/T,
  \qquad
  m_G=[W_G:W_K].
\]
The integer \(m_G\) is the number of members of every regular discrete-series
L-packet on \(G\).

\begin{theorem}[Main theorem]\label{thm:intro-main}
With the preceding data fixed, there are additive homomorphisms
\[
  \mathcal C_G:K_0(C_r^*G)\longrightarrow R(G_c),
  \qquad
  \mathcal J_G:R(G_c)\longrightarrow K_0(C_r^*G)
\]
with the following properties.
\begin{enumerate}[label=\textup{(\roman*)}]
\item \(\mathcal C_G\) is characterized by stable elliptic orbital traces on a
fixed dense set of regular elements, while \(\mathcal J_G\) is characterized
by ordinary elliptic orbital traces.
\item \(\mathcal C_G\) is surjective, \(\mathcal J_G\) is injective, and
\[
  \mathcal C_G\circ\mathcal J_G
  =m_G\,\mathrm{id}_{R(G_c)}.
\]
\item If
\[
  S_G=\operatorname{im}\mathcal J_G,
  \qquad
  U_G=\ker\mathcal C_G,
\]
then \(S_G\cap U_G=0\) and there is a short exact sequence
\begin{equation}\label{eq:intro-exact}
  0\longrightarrow S_G\oplus U_G
  \longrightarrow K_0(C_r^*G)
  \longrightarrow R(G_c)/m_GR(G_c)
  \longrightarrow0.
\end{equation}
\end{enumerate}
For every regular parameter \(\lambda\) with
\(\lambda-\rho\in X^*(T)\),
\begin{equation}\label{eq:intro-packet}
  \mathcal J_G([E_\lambda])
  =(-1)^{\dim(G/K)/2}
    \sum_{\pi\in\Pi_\lambda(G)}[\pi].
\end{equation}
\end{theorem}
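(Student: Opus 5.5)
The plan is to build both maps explicitly from orbital traces, using the Hochs--Wang separation theorem \cite[Corollary~4.1 and Theorem~6.4]{HochsWang2019Orbital} as the injectivity device, and then reduce everything to the lattice model of \cref{sec:lattice-model}.

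\textbf{Step 1: the data.} Let \(T^{\reg}_0\subset T\) be a fixed dense set of regular elements. For \(x\in K_0(\Cr G)\) and \(t\in T^{\reg}_0\) let \(\tau_t(x)\) be the ordinary elliptic orbital trace. The stable trace is
\[
  \tau^{\st}_t(x)=\sum_{w\in W_G/W_K}\tau_{wt}(x),
\]
the sum over representatives of the \(G\)-conjugacy classes in the stable class of \(t\).

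\textbf{Step 2: the map \(\mathcal J_G\).} Define \(\mathcal J_G\) on the basis \([E_\lambda]\) of \(R(G_c)\) by \eqref{eq:intro-packet} and extend additively. Injectivity follows because distinct regular \(\lambda\) give disjoint \(L\)-packets, and distinct discrete series give linearly independent \(K\)-theory classes; these are direct summands of \(K_0(\Cr G)\) by the Connes--Kasparov description. Property (i) for \(\mathcal J_G\) is Shelstad's identity \eqref{eq:intro-shelstad} in the Hochs--Wang index form \cite[Theorem~2.8]{HochsWang2018Shelstad}. Rewritten through the fixed-point formula, this says that
\[
  \tau_t\bigl(\mathcal J_G[E_\lambda]\bigr)=\chi_{E_\lambda}(t)\,D(t)^{-1}
\]
for all \(t\in T^{\reg}_0\), where \(D\) is the normalizing Weyl denominator fixed by the compatible measures. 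Since \(\chi_{E_\lambda}\) is \(W_G\)-invariant, this value is the same at every \(wt\).

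\textbf{Step 3: the map \(\mathcal C_G\).} For \(x\in K_0(\Cr G)\), the function \(t\mapsto D(t)\,\tau^{\st}_t(x)\) is \(W_G\)-invariant. It is a virtual character of \(G_c\): check this on the basis of \(K_0(\Cr G)\) given by \(\indexop\) of Dirac induction from \(R(K)\), equivalently discrete series and limit or nonelliptic pieces. Nonelliptic generators have vanishing elliptic orbital traces. A discrete-series class \([\pi_{w\lambda}]\) has ordinary traces given by Harish-Chandra's formula, a sum over \(W_K\). Summing over \(W_G/W_K\) turns this into a full \(W_G\)-alternating sum, which equals \((-1)^{\dim(G/K)/2}\chi_{E_\lambda}\). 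Define \(\mathcal C_G(x)\) to be this virtual character. The computation gives
\[
  \mathcal C_G[\pi]=(-1)^{\dim(G/K)/2}[E_\lambda]
\]
for each \(\pi\in\Pi_\lambda(G)\), so \(\mathcal C_G\) is surjective. By Step 2, \(\mathcal C_G\mathcal J_G[E_\lambda]\) is \(m_G\) copies of \(\chi_{E_\lambda}\), which proves (ii).

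\textbf{Step 4: part (iii).} Suppose \(x\in S_G\cap U_G\) with \(x=\mathcal J_G(y)\). Then \(0=\mathcal C_G(x)=m_Gy\), and since \(R(G_c)\) is torsion free, \(y=0\). For the exact sequence, define
\[
  K_0(\Cr G)\to R(G_c)/m_GR(G_c),\qquad x\mapsto \mathcal C_G(x)\bmod m_G .
\]
It is surjective because \(\mathcal C_G\) is.

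\begin{itemize}
\item \emph{The image of \(S_G\oplus U_G\) lies in the kernel.} Indeed \(\mathcal C_G(\mathcal J_G y+u)=m_Gy\).
\item \emph{The kernel lies in \(S_G\oplus U_G\).} If \(\mathcal C_G(x)=m_Gy\), then \(x-\mathcal J_G y\in U_G\).
\end{itemize}

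This is exactly the \(\mathbb Z(1,1)\oplus\mathbb Z(1,-1)\subset\mathbb Z^2\) argument from \cref{sec:lattice-model}.

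\textbf{Main obstacle.} The hard step is Step 3: showing that \(\mathcal C_G\) lands in \(R(G_c)\) integrally and is well defined on all of \(K_0(\Cr G)\), not just on discrete-series classes. This requires two things:
\begin{itemize}
\item vanishing of elliptic orbital traces on the nondiscrete summands;
\item consistent normalization of the orbital measures across the \(G\)-classes \(wt\), so that no stray constants appear.
\end{itemize}
I would first verify the vanishing through the Hochs--Wang fixed-point formula applied to Dirac induction \(\indexop(D_V)\), \(V\in R(K)\), which expresses every trace as a \(W_K\)-antisymmetric quotient. Summing over cosets then yields the \(W_G\)-Weyl character formula directly.
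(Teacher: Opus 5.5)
Your definition of \(\mathcal J_G\) and your parts (ii)--(iii) are sound. Your injectivity argument, that discrete-series classes span independent direct summands of \(K_0(\Cr G)\), is a valid alternative to the paper's, which reads injectivity off \(\tau_t^G(\mathcal J_GE)=\chi_E(t)\).

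The gap is in Step~3, which carries all the analytic content. You must show that \(t\mapsto\tau_t^{\st,G}(x)\) is the restriction of an integral virtual character of \(G_c\) for \emph{every} generator \(x\). You only do this for discrete-series classes, and dismiss the rest by asserting that their elliptic orbital traces vanish. That is false here. Because \(G\) has a compact Cartan, \cref{thm:separation} rules out a nonzero class whose elliptic traces vanish on a dense set, so there are no nonzero ``nonelliptic'' generators at all. The generators that are not discrete series are the Dirac-induction classes with \(G\)-singular parameter (limits of discrete series), and their ordinary elliptic traces are nonzero. In \(\SL(2,\R)\), \eqref{eq:sl2-orbital} gives \(\tau_t^G(D1)=-1/(z(t)-z(t)^{-1})\neq0\); only the \emph{stable} trace vanishes, \(\tau_t^G(D1)+\tau_{t^{-1}}^G(D1)=0\).

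The missing ingredient is the stable index identity \(\tau_t^{\st,G}(Q_G(\nu))=\chi_{Q_c(\nu)}(t)\) on \(T^\dagger\) for \emph{all} integral \(\nu\), including those with \(\nu+\rho\) singular, where \(Q_c(\nu)=0\) by Borel--Weil--Bott. The paper obtains this as \cref{cor:stable-index} from the Hochs--Wang comparison theorem and Weyl covariance in \cref{prop:index-inputs}, and then defines \(\mathcal C_G\) on the generators \(Q_G(\nu)\). Your final paragraph names the right computation: apply the fixed-point formula to every \(\indexop D_V\) and sum over cosets to get a \(W_G\)-alternating numerator over the Weyl denominator. Its target must be this identity, not ``the vanishing'' of ordinary traces. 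Also, since the fixed-point formula holds only on a dense \(W_G\)-invariant set, you need continuity of compact characters to conclude.

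Separately, the factor \(D(t)^{-1}\) in Step~2 should not be there. With the compatible measures, \(\tau_t^G([\pi])=\Theta_\pi(t)\) exactly by \eqref{eq:natural-class-character}; the Weyl denominator is already inside \(\Theta_\pi\) and \(\chi_E\). So Shelstad's identity reads \(\tau_t^G(\mathcal J_GE)=\chi_E(t)\), and \(\mathcal C_G(x)\) should be the character equal to \(\tau_t^{\st,G}(x)\) itself, not \(D\,\tau^{\st}\). Harish-Chandra's formula summed over \(W_G/W_K\) gives \(\tau_t^{\st,G}([\pi])=\varepsilon_G\chi_{E_\lambda}(t)\) with no extra factor, so your Steps~2 and~3 are consistent only when \(D\equiv1\). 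A genuine Weyl denominator is \(W_G\)-anti-invariant, which would also contradict your claim that \(\chi_E(t)D(t)^{-1}\) is constant along \(\{wtw^{-1}\}\).
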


The theorem should be read in two stages.  The analytic stage constructs the
maps.  Stable index comparison shows that the stable orbital trace of any
\(K\)-class is an integral virtual character of \(G_c\); this gives
\(\mathcal C_G\).  Shelstad's identity lifts an irreducible compact character
to the signed class of an entire L-packet; this gives \(\mathcal J_G\).  The
algebraic stage is then short: a compact character is invariant under the
absolute Weyl group, so stable averaging counts it once for every coset in
\(W_G/W_K\).  This gives
\(\mathcal C_G\mathcal J_G=m_G\) and the exact sequence
\eqref{eq:intro-exact}.

This distinction also clarifies the transfer statement.  L-packet transfer
between two inner forms is integral on the lattices \(S_G\): it is obtained by
using the compact representation as a common label.  By contrast, a projector
from the full group \(K_0(C_r^*G)\) onto \(S_G\) is
\[
  P_G^{\mathrm{st}}
  =\frac1{m_G}\mathcal J_G\mathcal C_G,
\]
so it is generally defined only after \(m_G\) has been inverted.  The quotient
in \eqref{eq:intro-exact} records exactly which primes obstruct integral
splitting.  The same integer has three interpretations:
\[
  m_G=|W_G/W_K|=|\Pi_\lambda(G)|=\chi(X^d),
\]
where \(X^d\) is a compact dual of \(G/K\).  The first equality drives the
proof, the second is the L-packet count, and the third is the Hopf--Samelson
Euler-characteristic formula \cite{HopfSamelson1940}.

The examples are part of the argument rather than decoration.  For
\(\mathrm{SL}(2,\mathbb R)\) we write both maps on
\(R(\mathrm{SO}(2))=\mathbb Z[z,z^{-1}]\), recover the trigonometric character
identity, and exhibit a class whose stable projection necessarily contains a
factor \(1/2\).  For symplectic groups we obtain
\[
  m_{\mathrm{Sp}(2n,\mathbb R)}=2^n,
  \qquad
  m_{\mathrm{Sp}(p,q)}=\binom{p+q}{p}.
\]
Thus the split form has a purely \(2\)-primary defect, whereas quaternionic
forms can have odd obstruction primes; \(\mathrm{Sp}(1,2)\) gives the first
such example, with multiplier \(3\).

The scope is deliberately narrower than general endoscopy.  We work with
connected semisimple equal-rank groups of finite center and ordinary elliptic
orbital traces.  Refined inner twists and canonical covers are treated in the
modern construction of real discrete-series L-packets by Adams and Kaletha
\cite[Sections~3--5]{AdamsKaletha2026}.  Beyond the discrete-series range,
higher orbital integrals and cyclic cocycles provide additional functionals on
\(K\)-theory \cite{SongTang2025}.  Huang's spinorial description of certain
transfer factors suggests a relative Dirac interpretation of endoscopic
transfer \cite[Theorem~1.1]{Huang2021}, but no endoscopic
\(KK\)-correspondence is claimed here.

The paper is organized as follows.  \Cref{sec:lattice-model} gives the lattice
model that predicts the exact sequence.  \Cref{sec:inputs} fixes the inner-form
data and records the analytic inputs.  The maps \(\mathcal C_G\) and
\(\mathcal J_G\) are constructed in \cref{sec:maps}.  The multiplier formula,
the integral defect, and the compact-dual interpretation are proved in
\cref{sec:defect}.  Stable transfer is developed in \cref{sec:transfer}.  The
rank-one and symplectic examples occupy \cref{sec:sl2,sec:symplectic}.

\section{A lattice model for the obstruction}\label{sec:lattice-model}

Before turning to orbital integrals, it is useful to separate the algebraic
mechanism from the analytic construction of the maps.  Let \(B\) be a free
abelian group, let \(m\geq1\), and put \(A=B^{\oplus m}\).  Define
\[
  j:B\longrightarrow A,
  \qquad
  j(b)=(b,\ldots,b),
\]
and
\[
  c:A\longrightarrow B,
  \qquad
  c(b_1,\ldots,b_m)=b_1+\cdots+b_m.
\]
The map \(j\) repeats one label in all \(m\) positions, while \(c\) forgets the
positions by summing them.  Hence \(cj=m\,\mathrm{id}_B\).

\begin{proposition}[The elementary defect sequence]\label{prop:lattice-model}
Let \(S=\operatorname{im}j\) and \(U=\ker c\).  Then \(S\cap U=0\), and
\[
  0\longrightarrow S\oplus U\longrightarrow A
  \overset{\bar c}{\longrightarrow}B/mB\longrightarrow0,
  \qquad
  \bar c(a)=c(a)\bmod mB,
\]
is exact.  After inverting \(m\), the projector onto \(S\) along \(U\) is
\[
  p=\frac1m jc.
\]
\end{proposition}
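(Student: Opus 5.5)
The argument is elementary linear algebra over \(\mathbb Z\), using only \(cj=m\,\mathrm{id}_B\) and the fact that \(B\) is torsion-free. I will prove \(S\cap U=0\), then exactness of the sequence at each of its three places, and finally the projector formula.

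\emph{Trivial intersection.} Take \(a\in S\cap U\) and write \(a=j(b)\). Then \(0=c(a)=cj(b)=mb\). Since \(B\) is free, it is torsion-free, so \(b=0\) and hence \(a=0\). Consequently the map \(S\oplus U\to A\), \((s,u)\mapsto s+u\), is injective, which is exactness at the left end.

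\emph{Surjectivity of \(\bar c\).} The map \(c\) is already surjective: \(c(b,0,\ldots,0)=b\). Hence \(\bar c\) is surjective as well.

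\emph{Exactness in the middle.} First, \(\bar c\) vanishes on \(S+U\): we have \(c(j(b))=mb\in mB\), and \(c(U)=0\) by definition. Conversely, suppose \(c(a)=mb\) for some \(b\in B\). Then \(c(a-j(b))=mb-mb=0\), so \(a-j(b)\in U\). Therefore \(a=j(b)+(a-j(b))\in S+U\).

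This middle step is the only one requiring a small idea: choose the lift \(b\) of \(c(a)/m\), which is exactly why the quotient \(B/mB\) records the obstruction.

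\emph{Projector after inverting \(m\).} Tensor with \(\mathbb Z[1/m]\) and set \(p=\frac1m jc\).
\begin{itemize}
\item \(p^2=\frac1{m^2}j(cj)c=\frac1{m^2}j(m\,\mathrm{id}_B)c=p\), so \(p\) is idempotent.
\item The image of \(p\) is contained in \(S[1/m]\). Moreover \(p(j(b))=\frac1m j(mb)=j(b)\), so \(p\) is the identity on \(S[1/m]\) and its image is exactly \(S[1/m]\).
\item The kernel of \(p\) is \(U[1/m]\): \(p(a)=0\) means \(j(c(a))=0\), and since \(j\) is injective this says \(c(a)=0\).
\end{itemize}
Hence \(A[1/m]=S[1/m]\oplus U[1/m]\), and \(p\) is the projector onto \(S[1/m]\) along \(U[1/m]\).

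I expect no real obstacle; the only care needed is to note the torsion-freeness of \(B\) in the intersection step.
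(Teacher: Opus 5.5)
Your proposal is correct and follows the paper's proof essentially step for step: torsion-freeness of \(B\) gives \(S\cap U=0\), the lift \(a-j(b)\in U\) when \(c(a)=mb\) gives exactness in the middle, and \(cj=m\,\mathrm{id}_B\) gives \(p^2=p\). You also spell out details the paper leaves implicit, namely surjectivity of \(c\) via \(c(b,0,\ldots,0)=b\) and the identification of the image and kernel of \(p\) using injectivity of \(j\) after inverting \(m\).
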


\begin{proof}
If \(j(b)\in U\), then \(mb=cj(b)=0\); freeness of \(B\) gives \(b=0\).
The map \(\bar c\) is surjective.  If \(c(a)=mb\), then
\(a-j(b)\in U\), so \(a\in S+U\).  This identifies the kernel of
\(\bar c\) with \(S\oplus U\).  Finally, \(p^2=p\) follows from
\(cj=m\,\mathrm{id}_B\), and its image and kernel are \(S\) and \(U\).
\end{proof}

For \(B=\mathbb Z\) and \(m=2\), the subgroups are the diagonal and
anti-diagonal lattices in \(\mathbb Z^2\).  Their generators form the matrix
\[
  \begin{pmatrix}1&1\\[2pt]1&-1\end{pmatrix},
\]
whose determinant is \(-2\).  This determinant is the obstruction in its
smallest possible form.  It says simultaneously that the two rational lines
are complementary and that their integral lattices miss one parity class.

The group-theoretic theorem follows the same pattern, but the analytic work is
needed to construct the analogues of \(c\) and \(j\).  The dictionary is:
\begin{table}[H]
\centering
\small
\renewcommand{\arraystretch}{1.15}
\caption{The lattice model and the group-theoretic construction.}
\label{tab:lattice-dictionary}
\begin{tabular}{@{}p{0.27\textwidth}p{0.62\textwidth}@{}}
\toprule
Lattice model & Real-group construction\\
\midrule
\(B\) & \(R(G_c)\), with basis the irreducible compact representations\\
\(A\) & \(K_0(C_r^*G)\), in the formal role of the ambient lattice\\
\(j\) & the L-packet lift \(\mathcal J_G\)\\
\(c\) & stable trace descent \(\mathcal C_G\)\\
\(m\) & \([W_G:W_K]\), the regular L-packet cardinality\\
\bottomrule
\end{tabular}
\end{table}

The table is a guide, not an assertion that
\(K_0(C_r^*G)\cong R(G_c)^{\oplus m_G}\).  What survives from the model is the
pair of trace-characterized maps and the relation
\(\mathcal C_G\mathcal J_G=m_G\).  Once that relation is proved, the integral
defect sequence is the same elementary argument as in
\cref{prop:lattice-model}.

\section{Inner data and analytic inputs}\label{sec:inputs}

\subsection{Fixed data and L-packet notation}

Throughout the paper, $G$ is a connected linear real semisimple group with finite center, $K<G$ is a maximal compact subgroup, and $T\subset K$ is a compact Cartan subgroup of $G$.  We fix a compact inner form $G_c$ and an inner identification under which $T$ is also a maximal torus of $G_c$.  All orbital measures are chosen compatibly with the conventions in \cite[Sections~2 and~6]{HochsWang2019Orbital}.  The constructions are canonical relative to these data.  We write $R(G_c)$ for the Grothendieck group of finite-dimensional complex representations of $G_c$, $\widehat{G_c}$ for its irreducible classes, and $\Cr G$ for the reduced group $C^*$-algebra.  A representation and its class in $R(G_c)$ will usually be denoted by the same symbol.

Let
\[
  W_G=W(G_{\C},T_{\C})\cong N_{G_c}(T)/T,
  \qquad
  W_K=N_K(T)/T.
\]
Choose a positive root system
\[
  R^+\subset R(\mathfrak g_{\C},\mathfrak t_{\C})
\]
and let $\rho$ be its half-sum.  Put
\[
  m_G=[W_G:W_K],
  \qquad
  q(G)=\frac12\dim(G/K),
  \qquad
  \varepsilon_G=(-1)^{q(G)}.
\]
The positive system is used to write down the Dolbeault operators and label the L-packets.  The maps constructed below are ultimately characterized by orbital traces, so they do not depend on this auxiliary labeling.

The main symbols are collected here for reference.
\begin{table}[H]
\centering
\small
\renewcommand{\arraystretch}{1.15}
\caption{Notation used throughout the paper.}
\label{tab:notation}
\begin{tabular}{@{}p{0.24\textwidth}p{0.65\textwidth}@{}}
\toprule
Symbol & Meaning\\
\midrule
\(T_{\mathrm{reg}}\) & regular elements of the common compact Cartan \(T\)\\
\(T^\dagger\) & a fixed Weyl-invariant dense subset on which the fixed-point formulas hold\\
\(Q_G(\nu)\) & the noncompact Dolbeault--Dirac index on \(G/T\) with weight \(\nu\)\\
\(Q_c(\nu)\) & the compact equivariant index on \(G_c/T\), an element of \(R(G_c)\)\\
\(\tau_t^G\), \(\tau_t^{\mathrm{st},G}\) & ordinary and stable elliptic orbital traces\\
\(\mathcal C_G\), \(\mathcal J_G\) & stable trace descent and L-packet lift\\
\(S_G\), \(U_G\) & \(\operatorname{im}\mathcal J_G\) and \(\ker\mathcal C_G\)\\
\bottomrule
\end{tabular}
\end{table}

Let $\lambda\in i\mathfrak t^*$ be regular and dominant for $R^+$, and suppose that $\lambda-\rho$ is a character of $T$.  The discrete-series representations with infinitesimal character $\lambda$ form the L-packet
\begin{equation}\label{eq:packet-cosets}
  \Pi_\lambda(G)
  =\{\pi_{w^{-1}\lambda}:[w]\in W_G/W_K\},
  \qquad
  |\Pi_\lambda(G)|=m_G;
\end{equation}
see \cite[Proposition~2.7]{HochsWang2018Shelstad}.  The compact form has a single corresponding representation, denoted $E_\lambda$, of highest weight $\lambda-\rho$.  Conversely, every irreducible representation of $G_c$ arises this way after $R^+$ has been fixed, by the Borel--Weil--Bott theorem \cite{Bott1957}.

\begin{example}[The compact case]\label{ex:compact-case}
If \(G\) is already compact, then \(K=G=G_c\), \(W_K=W_G\), and
\(m_G=1\).  Under the standard identification
\(K_0(C_r^*G)\cong R(G)\), both maps constructed below are the identity.
Thus \(S_G=K_0(C_r^*G)\), \(U_G=0\), and there is no integral defect.  The
obstruction begins precisely when a stable elliptic class splits into more than
one ordinary conjugacy class.
\end{example}

\subsection{Orbital traces}

For \(t\in T_{\reg}\) and a Schwartz function \(f\) on \(G\), define
\[
  \tau_t^G(f)
  =\int_{G/Z_G(t)} f(xtx^{-1})\,\dd(xZ_G(t)).
\]
This integral averages \(f\) over the ordinary conjugacy class of \(t\).  It is
a trace on the Harish--Chandra Schwartz algebra and therefore pairs with
\(K\)-theory:
\[
  \tau_t^G:K_0(\Cr G)\longrightarrow\C;
\]
see \cite[Sections~2--3]{HochsWang2019Orbital}.  For the natural class of a
discrete-series representation, this pairing recovers the Harish--Chandra
character.  Thus orbital traces are the bridge between the geometric
\(K\)-classes and the character identity.

Stable conjugacy is coarser than ordinary conjugacy.  On the common elliptic
Cartan, ordinary conjugacy is measured by \(W_K\), while stable conjugacy is
measured by the larger group \(W_G\).  Consequently the stable orbital trace is
\begin{equation}\label{eq:stable-orbital-sum}
  \tau_t^{\st,G}
  =\sum_{[w]\in W_G/W_K}\tau_{wtw^{-1}}^G,
  \qquad t\in T_{\reg}.
\end{equation}
Indeed, ordinary conjugacy on $T_{\reg}$ is controlled by $W_K$, whereas stable conjugacy is controlled by $W_G$; see Arthur \cite[Section~27, p.~194]{Arthur2005} and Hochs--Wang \cite[Lemma~6.10]{HochsWang2019Orbital}.

For \(\mathrm{SL}(2,\mathbb R)\), one has \(W_K=1\) and
\(W_G\cong\mathbb Z/2\mathbb Z\); hence
\(\tau_t^{\mathrm{st},G}=\tau_t^G+\tau_{t^{-1}}^G\).  This two-term formula is
the geometric source of the factor \(2\) in \cref{sec:sl2}.  No endoscopic
transfer factor is present in \eqref{eq:stable-orbital-sum}: for regular
elliptic elements in this inner-form setting, stable averaging is literally the
sum over the ordinary classes in one stable class.

The fixed-point formulas in \cite[Theorem~3.2 and Section~6]{HochsWang2019Orbital} hold on a full-measure, hence dense, subset of $T$.  We fix once and for all a $W_G$-invariant dense subset
\[
  T^\dagger\subset T_{\reg}
\]
on which all formulas used below hold.  Such a set is obtained by intersecting the relevant full-measure set with its finitely many $W_G$-translates.  Keeping \(T^\dagger\) explicit avoids an unwarranted pointwise regularity assertion.  Nothing in the argument requires a formula at the exceptional regular elements: compact virtual characters are continuous, and the orbital traces on a dense subset already separate \(K_0(\Cr G)\).  This is why an almost-everywhere fixed-point formula is sufficient for an integral statement about the whole \(K\)-group.

The first analytic input is the following separation theorem.

\begin{theorem}[Hochs--Wang]\label{thm:separation}
If $x\in K_0(\Cr G)$ and $\tau_t^G(x)=0$ for every $t$ in a dense subset of $T_{\reg}$, then $x=0$.
\end{theorem}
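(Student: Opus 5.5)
This is the Hochs--Wang separation theorem, quoted as an analytic input, so the plan is to reduce it to their results rather than reprove them from scratch. The approach is to combine the Connes--Kasparov isomorphism with the fixed-point formula for orbital traces. It then suffices to show that \(x\mapsto(\tau_t^G(x))_{t}\) is injective on a \(\Z\)-basis of \(K_0(\Cr G)\).

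\textbf{Step 1: a basis of \(K_0(\Cr G)\).} By the Connes--Kasparov isomorphism (Lafforgue, Wassermann), \(K_0(\Cr G)\) is the free abelian group on Dirac induction classes \(\mathrm{DInd}(V_\mu)\), indexed by irreducible \(K\)-representations (with the usual spin twist). Since \(G\) has a compact Cartan, each such class is, up to sign, either the class of a discrete-series representation \(\pi_\lambda\) (with \(\lambda\) regular) or a limit-type class for singular \(\lambda\). In both cases it is labelled by a \(W_K\)-orbit of parameters \(\lambda\in\rho+X^*(T)\), taken modulo the shift by \(\rho_c\).

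\textbf{Step 2: evaluating orbital traces on the basis.} On \(T^\dagger\), the fixed-point formula \cite[Theorem~3.2]{HochsWang2019Orbital} expresses \(\tau_t^G\) of the class labelled by \(\lambda\) as
\[
\tau_t^G(x_\lambda)=\pm\frac{\sum_{w\in W_K}\varepsilon(w)\,e^{w\lambda}(t)}{\prod_{\alpha\in R^+}(e^{\alpha/2}-e^{-\alpha/2})(t)},
\]
which is the Harish-Chandra character formula on the compact Cartan. The denominator is nonzero on \(T_{\reg}\). So if \(x=\sum n_\lambda x_\lambda\) has vanishing traces on a dense set, then by continuity the numerator \(\sum_\lambda n_\lambda\sum_{w\in W_K}\varepsilon(w)e^{w\lambda}\) vanishes on all of \(T\).

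\textbf{Step 3: linear independence.} Distinct characters \(e^\mu\) of \(T\) are linearly independent. The \(W_K\)-anti-invariant sums attached to \(\lambda\) in distinct \(W_K\)-orbits involve disjoint sets of exponents, provided \(\lambda\) is \(W_K\)-regular. Hence each such sum is nonzero and they are mutually independent, which forces every \(n_\lambda=0\).

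\textbf{Main obstacle.} The hard part is the basis elements with parameters that are singular for \(W_G\) but regular for \(W_K\), the nondiscrete tempered classes. One has to verify that the Hochs--Wang fixed-point formula still gives the \(W_K\)-alternating numerator for these elements rather than zero. This is exactly the content of \cite[Corollary~4.1 and Theorem~6.4]{HochsWang2019Orbital}, and I would invoke it there. If it were unavailable, I would first check that the Dirac-induction parametrization \(\mu+\rho_c\) is always \(W_K\)-regular, so the numerators never cancel.
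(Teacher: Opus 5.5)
Your proposal is correct and is essentially the argument behind the paper's proof. The paper cites \cite[Corollary~4.1]{HochsWang2019Orbital} and notes that it rests on the Connes--Kasparov isomorphism together with the orbital-trace formula for Dirac induction, which is exactly your Steps~1--3.

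The ``main obstacle'' you flag is not really one. The relevant fixed-point formula is \cite[Theorem~3.2]{HochsWang2019Orbital}, not Theorem~6.4, which is the stable index comparison. It applies uniformly to every Dirac-induction class, giving $\pm\chi_V(t)$ divided by the noncompact factor of the Weyl denominator whether or not $\mu+\rho_c$ is $W_G$-regular. So Step~3 needs only the $W_K$-regularity of $\mu+\rho_c$ that you already observe, which is equivalent to saying that a virtual character of the connected group $K$ vanishing on $T$ is zero.
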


\begin{proof}
This is \cite[Corollary~4.1]{HochsWang2019Orbital}.  Its proof uses the orbital-trace formula for Dirac induction and the Connes--Kasparov isomorphism.  The latter was proved for almost connected groups by Chabert, Echterhoff, and Nest \cite{ChabertEchterhoffNest2003}.
\end{proof}

\subsection{Dolbeault--Dirac indices}

For an integral weight $\nu\in i\mathfrak t^*$, let
\[
  L_\nu^G=G\times_T\C_\nu\longrightarrow G/T,
  \qquad
  L_\nu^c=G_c\times_T\C_\nu\longrightarrow G_c/T.
\]
The positive roots determine invariant complex structures, and we write
\[
  D_\nu^G=\bar\partial_{L_\nu^G}+\bar\partial_{L_\nu^G}^*,
  \qquad
  D_\nu^c=\bar\partial_{L_\nu^c}+\bar\partial_{L_\nu^c}^*.
\]
Set
\[
  Q_G(\nu)=\indexop_G(D_\nu^G)\in K_0(\Cr G),
  \qquad
  Q_c(\nu)=\indexop_{G_c}(D_\nu^c)\in R(G_c).
\]
The notation emphasizes that the same weight \(\nu\) is used on both sides:
\[
  (G/T,L_\nu^G)\qquad\longleftrightarrow\qquad(G_c/T,L_\nu^c).
\]
The left index is analytic and belongs to the reduced group \(C^*\)-algebra;
the right index is compact and hence is an honest virtual representation.  By
Borel--Weil--Bott, \(Q_c(\nu)\) is either zero or a signed irreducible
representation after the usual Weyl correction.  The comparison theorem says
that stable orbital traces erase the difference between the two geometries.
The realization of discrete series by Dirac-type operators goes back to Parthasarathy \cite{Parthasarathy1972} and Atiyah--Schmid \cite{AtiyahSchmid1977,AtiyahSchmid1979}.  We use the following precise consequences of the later $C^*$-algebraic theory.

\begin{proposition}[Index-theoretic inputs]\label{prop:index-inputs}
For integral weights $\nu$, the following hold.
\begin{enumerate}[label=\textup{(\roman*)}]
\item The classes $Q_G(\nu)$ generate $K_0(\Cr G)$ as an abelian group.
\item For $w\in W_G$ and $t\in T^\dagger$,
\begin{equation}\label{eq:weyl-covariance}
  \tau_{wtw^{-1}}^G\bigl(Q_G(\nu)\bigr)
  =\tau_t^G\bigl(Q_G(w^{-1}\nu)\bigr).
\end{equation}
\item For $t\in T^\dagger$,
\begin{equation}\label{eq:index-comparison}
  \chi_{Q_c(\nu)}(t)
  =\sum_{[w]\in W_G/W_K}
     \tau_t^G\bigl(Q_G(w^{-1}\nu)\bigr).
\end{equation}
\end{enumerate}
\end{proposition}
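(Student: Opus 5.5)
This is an input proposition, so the plan is to reduce each item to a cited result in the Connes--Kasparov and Hochs--Wang framework, checking at each step that the conventions of \cref{sec:inputs} match the source.

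For (i), I would invoke the Connes--Kasparov isomorphism, proved for almost connected groups in \cite{ChabertEchterhoffNest2003}. It identifies \(K_0(\Cr G)\) with \(R(K)\), twisted by the spin representation of \(\mathfrak p\) when needed, via Dirac induction. Since \(T\) is a maximal torus of \(K\), every irreducible \(K\)-type is obtained by holomorphic induction from a character of \(T\). By Bott's theorem on \(K/T\), the Dirac induction of that \(K\)-type equals \(\pm Q_G(\nu)\) for a suitable \(\nu\): one factors \(G/T\to G/K\), and the Dolbeault operator on \(G/T\) is the Dirac induction of the \(K\)-equivariant Dolbeault index on the fibre \(K/T\). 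This is the content of \cite[Sections~3--4]{HochsWang2019Orbital}. The images \(Q_G(\nu)\) therefore contain a generating set, which proves (i).

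For (ii), the starting point is the fixed-point formula \cite[Theorem~3.2]{HochsWang2019Orbital}. For \(t\in T^\dagger\) it expresses \(\tau_t^G(Q_G(\nu))\) as a sum over the fixed points \(W_K\)-cosets of \(t\) on \(G/T\). Each term has the shape
\[
  \frac{e^{\nu}(t')}{\prod_{\alpha\in R^+}(1-e^{-\alpha}(t'))},
\]
with a sign governed by the noncompact roots. Substituting \(wtw^{-1}\) reindexes the sum, and on each term this amounts to replacing \(\nu\) by \(w^{-1}\nu\) together with transporting \(R^+\) by \(w\).

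The \textbf{main obstacle} is exactly this transport: \(w\in W_G\setminus W_K\) does not preserve the complex structure defined by \(R^+\) on \(G/T\). One must show that the denominator and sign changes cancel, so that the formula holds with the operator for the fixed \(R^+\). I would check this directly from the explicit formula. If the direct check fails, I would quote \cite[Lemma~6.10 and Section~6]{HochsWang2019Orbital}, where the Weyl covariance of orbital traces on elliptic elements is established, and restrict to \(T^\dagger\) so that both sides are given by the formula. The \(W_G\)-invariance of \(T^\dagger\) guarantees that \(wtw^{-1}\in T^\dagger\).

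For (iii), I would apply the Atiyah--Bott fixed-point formula on \(G_c/T\). Its fixed points are indexed by all of \(W_G\), and it yields the Weyl character formula for \(\chi_{Q_c(\nu)}(t)\) as a sum over \(W_G\). I would then group that sum into \(W_K\)-cosets. Each coset sum coincides with the Hochs--Wang fixed-point expression for \(\tau_t^G(Q_G(w^{-1}\nu))\), which is \cite[Theorem~6.4]{HochsWang2019Orbital}, after using (ii) to move \(w\) from the point onto the weight. Two normalizations have to be checked along the way: the signs must match because of the common choice of \(R^+\) on both sides, and the compatible orbital measures fixed in \cref{sec:inputs} are what make the constants agree.
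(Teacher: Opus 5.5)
Your part (i) follows the paper's route: surjectivity of Dirac induction plus its Dolbeault realization, which the paper simply cites as Hochs--Wang, Proposition~6.5. The gap is in (ii), and your (iii) inherits it through the step ``use (ii) to move \(w\) from the point onto the weight.''

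First, the cancellation you plan to verify for the fixed \(R^+\) does not occur. In \(\SL(2,\R)\) the fixed-point formula gives \(\tau_t^G(Q_G(\nu))=e^{\nu+\rho}(t)/(z(t)-z(t)^{-1})\) uniformly in \(\nu\) (compare \eqref{eq:sl2-positive-index}). Hence
\[
  \tau_{t^{-1}}^G\bigl(Q_G(\nu)\bigr)
  =-\tau_t^G\bigl(Q_G(-\nu-2\rho)\bigr)
  \neq\tau_t^G\bigl(Q_G(-\nu)\bigr).
\]
Transport by \(w\) produces the \(\rho\)-shifted action and the sign \(\det w\), not \(w^{-1}\nu\). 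At best, \(Q_G(w^{-1}\nu)\) must mean the Dolbeault index for the transported system \(w^{-1}R^+\); that is also the only reading consistent with \eqref{eq:sl2-negative-index}.

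Second, even in that reading your ``reindexing'' is not one. The fixed points of a regular element of \(T\) on \(G/T\) are indexed by \(W_K\). With \(h\) the local holomorphic Lefschetz term, the summands of \(\tau^G_{wtw^{-1}}(Q_G(\nu))\) are \(h(u^{-1}wtw^{-1}u)\), indexed by \(W_Kw\). The summands of \(\tau_t^G\) for the transported operator are \(h(wu^{-1}tuw^{-1})\), indexed by \(wW_K\). This is a real obstruction, not bookkeeping:
\begin{itemize}
\item \(t\mapsto\tau_t^G(y)\) is \(W_K\)-invariant for every \(y\in K_0(\Cr G)\).
\item Take \(\nu=\lambda-\rho\) with \(\lambda\) regular dominant, so \(Q_G(\nu)=\varepsilon_G[\pi_\lambda]\). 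Harish-Chandra's formula \(\Theta_{\pi_\lambda}=\varepsilon_G\Delta^{-1}\sum_{u\in W_K}\det(u)e^{u\lambda}\), with \(\Delta=\prod_{\alpha\in R^+}(e^{\alpha/2}-e^{-\alpha/2})\), and the linear independence of the \(e^{x\lambda}\) show that \(t\mapsto\tau^G_{wtw^{-1}}(Q_G(\nu))\) is \(W_K\)-invariant only if \(wW_Kw^{-1}=W_K\).
\end{itemize}
For \(\Sp(4,\R)\) and \(w\) the reflection in \(2e_1\), this fails. So (ii) is false for such \(w\), and neither your direct check nor a citation can supply it. The paper's own proof of (ii) is only a citation of Hochs--Wang, Lemma~6.6; the Lemma~6.10 you name is what the paper uses for \eqref{eq:stable-orbital-sum}.

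What survives is the coset-summed identity, and that is all that is used downstream via \cref{cor:stable-index}. Your Atiyah--Bott idea proves it directly, without (ii). On \(G_c/T\) the fixed points of \(t\) are indexed by \(x\in W_G\), with summands \(h(x^{-1}tx)\). This is the same \(h\) as on \(G/T\), because the holomorphic tangent spaces at the base point are the same \(T\)-module. Grouping \(x=wu\) with \(u\in W_K\) gives
\[
  \chi_{Q_c(\nu)}(t)
  =\sum_{[w]\in W_G/W_K}\tau^G_{w^{-1}tw}\bigl(Q_G(\nu)\bigr)
  =\tau_t^{\st,G}\bigl(Q_G(\nu)\bigr)
\]
for the fixed \(R^+\) and the same \(\nu\). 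Note that \(w^{-1}tw\), not \(wtw^{-1}\), is what is well defined on left cosets. Grouping \(x=uw\) instead gives (iii), with \(Q_G(w^{-1}\nu)\) read as the index for \(w^{-1}R^+\). The constants are those fixed by the measure normalization in Hochs--Wang, Theorem~6.4. This regrouping should replace the term-by-term use of (ii), both in your (iii) and in the paper's proof of \cref{cor:stable-index}.
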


\begin{proof}
For part~(i), combine the surjectivity of Dirac induction with its Dolbeault realization in \cite[Proposition~6.5]{HochsWang2019Orbital}; each irreducible Dirac-induction generator is represented by one of the displayed indices.  Part~(ii) is \cite[Lemma~6.6]{HochsWang2019Orbital}.  Part~(iii) is \cite[Theorem~6.4]{HochsWang2019Orbital}, whose original form is \cite[Equation~(3.6)]{HochsWang2018Shelstad}.
\end{proof}

Combining \eqref{eq:stable-orbital-sum}, \eqref{eq:weyl-covariance}, and \eqref{eq:index-comparison} gives the stable identity that drives the construction.

\begin{corollary}[Stable index identity]\label{cor:stable-index}
For every integral $\nu$ and every $t\in T^\dagger$,
\begin{equation}\label{eq:stable-index}
  \tau_t^{\st,G}\bigl(Q_G(\nu)\bigr)
  =\chi_{Q_c(\nu)}(t).
\end{equation}
\end{corollary}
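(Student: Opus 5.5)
The identity \eqref{eq:stable-index} will follow by a direct substitution, combining the three displayed formulas in the order the sentence before the corollary suggests. Fix an integral weight \(\nu\) and a point \(t\in T^\dagger\). First expand the left-hand side with the definition \eqref{eq:stable-orbital-sum} of the stable orbital trace. Since \(\tau^{\st,G}_t\) is a finite sum of the traces \(\tau^G_{wtw^{-1}}\), each of which pairs with \(K_0(\Cr G)\), we get
\[
  \tau_t^{\st,G}\bigl(Q_G(\nu)\bigr)
  =\sum_{[w]\in W_G/W_K}\tau^G_{wtw^{-1}}\bigl(Q_G(\nu)\bigr).
\]

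Next, rewrite each summand with the Weyl covariance \eqref{eq:weyl-covariance}. This is legitimate because \(t\in T^\dagger\) is the hypothesis of \cref{prop:index-inputs}(ii). Each term becomes \(\tau^G_t\bigl(Q_G(w^{-1}\nu)\bigr)\), and the resulting sum is exactly the right-hand side of the comparison formula \eqref{eq:index-comparison}. That formula then gives \(\chi_{Q_c(\nu)}(t)\), as required.

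The only point needing care, and the one I expect to be the main obstacle, is that each expression is well defined on cosets \([w]\in W_G/W_K\) rather than on chosen representatives. The same index set must also appear in \eqref{eq:stable-orbital-sum} and in \eqref{eq:index-comparison}.

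\begin{itemize}
\item For the summands \(\tau^G_{wtw^{-1}}\): replacing \(w\) by \(wk\) with \(k\in W_K\) moves \(t\) to \(ktk^{-1}\) before conjugating by \(w\). Since \(k\) is represented by an element of \(N_K(T)\subset G\), the point \(wkt(wk)^{-1}\) lies in the same ordinary \(G\)-conjugacy class as \(wtw^{-1}\). Ordinary orbital traces are invariant under \(G\)-conjugation of \(t\), so the summand is unchanged.
\item For the summands \(\tau^G_t\bigl(Q_G(w^{-1}\nu)\bigr)\): the same invariance, combined with \eqref{eq:weyl-covariance} applied to \(k\), gives \(\tau_t^G\bigl(Q_G(k^{-1}\mu)\bigr)=\tau_t^G\bigl(Q_G(\mu)\bigr)\) for every integral \(\mu\) and \(k\in W_K\).
\end{itemize}

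One should also check that \(w^{-1}\nu\) is again integral, which holds because \(W_G\) preserves \(X^*(T)\). Finally, one needs \(wtw^{-1}\in T^\dagger\), which is where the \(W_G\)-invariance of \(T^\dagger\) is used. Once these representative-independence checks are recorded, the chain of three equalities proves the corollary for every \(t\in T^\dagger\).
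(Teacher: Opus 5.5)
Your three-step chain is exactly the paper's proof: expand with \eqref{eq:stable-orbital-sum}, apply \eqref{eq:weyl-covariance} term by term, then invoke \eqref{eq:index-comparison}. Your second bullet, giving right-$W_K$-invariance of $w\mapsto\tau^G_t\bigl(Q_G(w^{-1}\nu)\bigr)$, is also correct.

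The justification in your first bullet, however, is false. We have $wkt(wk)^{-1}=(wkw^{-1})(wtw^{-1})(wkw^{-1})^{-1}$, and $wkw^{-1}$ need not lie in $W_K$. The $G$-conjugacy of $ktk^{-1}$ and $t$ is not preserved by conjugation with $w$, because $w$ is not represented in $G$. For example, in $\Sp(4,\R)$ we have $W_K=\{1,s_{e_1-e_2}\}$. Taking $w=s_{2e_1}$ and $k=s_{e_1-e_2}$ gives $wkw^{-1}=s_{e_1+e_2}\notin W_K$. So for regular $t$ the points $wtw^{-1}$ and $wkt(wk)^{-1}$ lie in different ordinary classes.

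Conjugation invariance only gives invariance under $w\mapsto uw$ with $u\in W_K$. Thus the ordinary classes inside the stable class are indexed by $W_K\backslash W_G$, not $W_G/W_K$. A clean way to reconcile the two index sets is to average over all of $W_G$:
\begin{align*}
  \tau_t^{\st,G}\bigl(Q_G(\nu)\bigr)
  &=\frac{1}{|W_K|}\sum_{w\in W_G}\tau^G_{wtw^{-1}}\bigl(Q_G(\nu)\bigr)
   =\frac{1}{|W_K|}\sum_{w\in W_G}\tau^G_{t}\bigl(Q_G(w^{-1}\nu)\bigr)\\
  &=\sum_{[w]\in W_G/W_K}\tau^G_{t}\bigl(Q_G(w^{-1}\nu)\bigr).
\end{align*}
The justifications are as follows. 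The first equality uses left-$W_K$-invariance of the summand, reading \eqref{eq:stable-orbital-sum} as the sum over ordinary classes in the stable class. The second applies \eqref{eq:weyl-covariance} for every $w\in W_G$. The third uses your second bullet. Then apply \eqref{eq:index-comparison}.

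With the first bullet replaced by this averaging step, your argument is complete relative to the paper's stated inputs. The remark that $wtw^{-1}\in T^\dagger$ is harmless but not needed, since \eqref{eq:weyl-covariance} and \eqref{eq:index-comparison} are only invoked at $t$ itself.
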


\begin{proof}
The left-hand side is
\[
  \sum_{[w]\in W_G/W_K}
  \tau_{wtw^{-1}}^G\bigl(Q_G(\nu)\bigr).
\]
Use \eqref{eq:weyl-covariance} term by term, and then apply \eqref{eq:index-comparison}.
\end{proof}

For a discrete-series representation $\pi$, let $[\pi]\in K_0(\Cr G)$ denote its natural $K$-theory class.  If $v$ is a unit $K$-finite vector and $d_\pi$ is the formal degree, this class is represented by the convolution idempotent $d_\pi m_{v,v}$; see the proof of \cite[Corollary~4.10]{HochsWang2019Orbital}.  With the compatible measure normalization,
\begin{equation}\label{eq:natural-class-character}
  \tau_t^G([\pi])=\Theta_\pi(t),
  \qquad t\in T_{\reg};
\end{equation}
see \cite[Proposition~2.11]{HochsWang2018Shelstad}.  In particular,
\begin{equation}\label{eq:natural-class-index}
  [\pi_\lambda]=\varepsilon_G Q_G(\lambda-\rho).
\end{equation}
The compact-inner-form character identity reads
\begin{equation}\label{eq:shelstad-compact}
  \varepsilon_G\sum_{\pi\in\Pi_\lambda(G)}\Theta_\pi(t)
  =\chi_{E_\lambda}(t),
  \qquad t\in T_{\reg};
\end{equation}
see \cite[Theorem~2.8]{HochsWang2018Shelstad}.

\section{Two maps selected by orbital traces}\label{sec:maps}

The two maps point in opposite directions, but they solve different problems.  Starting with a noncompact \(K\)-class, stable trace descent asks whether its stable orbital trace comes from a compact virtual character.  Starting with a compact representation, the L-packet lift asks which noncompact \(K\)-class has that character as its ordinary orbital trace.  The first question requires the stable index identity; the second requires Shelstad's character identity.

A useful summary is
\begin{equation}\label{eq:two-map-summary}
  R(G_c)
  \overset{\mathcal J_G}{\longrightarrow}
  K_0(\Cr G)
  \overset{\mathcal C_G}{\longrightarrow}
  R(G_c),
  \qquad
  E\longmapsto
  \varepsilon_G\!\sum_{\pi\in\Pi(E)}[\pi]
  \longmapsto m_GE.
\end{equation}
The last arrow is the conclusion of \cref{thm:CJ}; the present section constructs the first two arrows without assuming that formula.

\subsection{Stable trace descent}

A tempting definition would be to send \(x\in K_0(\Cr G)\) to the function
\(t\mapsto\tau_t^{\mathrm{st},G}(x)\).  The issue is that an invariant function
on \(T\) need not be an integral virtual character.  The stable index identity
resolves exactly this point: on the Dolbeault generators, the function is the
character of \(Q_c(\nu)\), and all relations among the noncompact generators
remain relations among the compact indices.

\begin{theorem}[Stable trace descent]\label{thm:C-map}
There is a unique additive homomorphism
\[
  \mathcal C_G:K_0(\Cr G)\longrightarrow R(G_c)
\]
such that
\begin{equation}\label{eq:C-characterization}
  \chi_{\mathcal C_G(x)}(t)=\tau_t^{\st,G}(x)
  \qquad(x\in K_0(\Cr G),\ t\in T^\dagger).
\end{equation}
It satisfies
\begin{equation}\label{eq:C-on-index}
  \mathcal C_G\bigl(Q_G(\nu)\bigr)=Q_c(\nu)
\end{equation}
for every integral $\nu$, and it is surjective.
\end{theorem}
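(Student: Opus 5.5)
The plan is to prove uniqueness first, then existence by defining the map on generators, and finally surjectivity.

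\emph{Uniqueness.} A virtual character of \(G_c\) is a continuous class function. It is therefore determined by its restriction to \(T\), and hence by its values on the dense set \(T^\dagger\). By \eqref{eq:C-characterization}, \(\mathcal C_G(x)\) is pinned down as an element of \(R(G_c)\), because the character map \(R(G_c)\to C(T)\) is injective.

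\emph{Existence.} By \cref{prop:index-inputs}(i), every \(x\in K_0(\Cr G)\) can be written as a finite integral combination
\[
  x=\sum_i n_i\,Q_G(\nu_i).
\]
I would set
\[
  \mathcal C_G(x)=\sum_i n_i\,Q_c(\nu_i).
\]
The main point is well-definedness: suppose \(\sum_i n_iQ_G(\nu_i)=0\) in \(K_0(\Cr G)\). Each ordinary trace \(\tau^G_s\) is additive on \(K_0\), so each stable trace \(\tau^{\st,G}_t\) is additive as well, being a finite sum \eqref{eq:stable-orbital-sum} of ordinary traces at points \(wtw^{-1}\in T^\dagger\); here the \(W_G\)-invariance of \(T^\dagger\) is used. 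By \cref{cor:stable-index}, for every \(t\in T^\dagger\),
\[
  \chi_{\sum_i n_iQ_c(\nu_i)}(t)=\tau_t^{\st,G}\Bigl(\sum_i n_iQ_G(\nu_i)\Bigr)=0.
\]
Since characters are continuous and \(T^\dagger\) is dense in \(T\), this character vanishes on \(T\), hence on all of \(G_c\), because every element of \(G_c\) is conjugate into \(T\). Injectivity of the character map then gives \(\sum_i n_iQ_c(\nu_i)=0\). So \(\mathcal C_G\) is a well-defined additive map. It satisfies \eqref{eq:C-on-index} by construction, and it satisfies \eqref{eq:C-characterization} on generators by \cref{cor:stable-index}, hence on all of \(K_0(\Cr G)\) by additivity of both sides.

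The point that needs care is the linear-algebra step just described. The separation theorem \cref{thm:separation} is not needed for this step; only the additivity of traces and density are used. I expect that \cref{thm:separation} will instead matter later, for injectivity of \(\mathcal J_G\).

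\emph{Surjectivity.} Let \(E\in\widehat{G_c}\). By Borel--Weil--Bott, \(E=E_\lambda\) for some dominant regular \(\lambda\) with \(\lambda-\rho\) integral, and then \(Q_c(\lambda-\rho)=\pm E_\lambda\), with sign \(+\) for the chosen complex structure; in any case the sign is a unit. Therefore
\[
  E=\pm\,\mathcal C_G\bigl(Q_G(\lambda-\rho)\bigr)
\]
lies in the image. Since \(\widehat{G_c}\) is a \(\Z\)-basis of \(R(G_c)\), the map \(\mathcal C_G\) is surjective.
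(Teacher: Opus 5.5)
Your proposal is correct and follows the paper's proof essentially step for step: you define $\mathcal C_G$ on the Dolbeault generators $Q_G(\nu)$ and check well-definedness through \cref{cor:stable-index}, continuity of characters, and density of $T^\dagger$ (like the paper, without needing \cref{thm:separation}), and you deduce surjectivity from Borel--Weil--Bott. Your surjectivity step is more explicit than the paper's, which only cites that the $Q_c(\nu)$ generate $R(G_c)$; the exact identification $Q_c(\lambda-\rho)=\pm E_\lambda$ depends on conventions, but the argument only needs each irreducible to equal $\pm Q_c(\nu)$ for some integral $\nu$.
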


\begin{proof}
By \cref{prop:index-inputs}(i), write
\[
  x=\sum_{j=1}^r a_jQ_G(\nu_j),
  \qquad a_j\in\Z,
\]
and set
\[
  \mathcal C_G(x)=\sum_{j=1}^r a_jQ_c(\nu_j).
\]
Suppose that the displayed expression for $x$ is zero.  Its stable orbital trace then vanishes on $T^\dagger$.  By \cref{cor:stable-index}, the compact virtual representation on the right has character zero on the dense set $T^\dagger$.  A compact virtual character is continuous and is determined by its restriction to a dense subset of a maximal torus, so the virtual representation is zero.  Thus the definition is independent of the chosen expression.  The same argument proves \eqref{eq:C-characterization} and uniqueness.

For surjectivity, the Borel--Weil--Bott theorem says that the classes $Q_c(\nu)$ generate $R(G_c)$ as $\nu$ ranges over the integral weights; see \cite{Bott1957}.  Equation \eqref{eq:C-on-index} supplies preimages for these generators.
\end{proof}

The right-hand side of \eqref{eq:C-characterization} is initially only a stable orbital-trace function.  The theorem says that it is not merely invariant: on \(T^\dagger\) it is the restriction of an integral virtual character of \(G_c\).  In particular,
\[
  Q_G(\nu)\longmapsto Q_c(\nu)
\]
is compatible with every integral relation among the noncompact indices.  This is the substantive point: without it, one could define stable descent only after passing to a space of functions, not as a homomorphism into the representation ring.

\subsection{The L-packet lift}

There is generally no preferred representation inside a nontrivial L-packet.
Choosing one would destroy stability and would not be compatible with inner
forms.  The signed sum of all L-packet members is instead singled out by
\eqref{eq:shelstad-compact}: its ordinary elliptic orbital trace is exactly the
compact character.  This trace property, rather than a choice of parameter
representative, is what makes the lift intrinsic.

\begin{definition}[L-packet lift]\label{def:J-map}
For $E_\lambda\in\widehat{G_c}$, define
\begin{equation}\label{eq:J-definition}
  \mathcal J_G([E_\lambda])
  =\varepsilon_G\sum_{\pi\in\Pi_\lambda(G)}[\pi],
\end{equation}
and extend linearly to
\[
  \mathcal J_G:R(G_c)\longrightarrow K_0(\Cr G).
\]
\end{definition}

\begin{proposition}[Orbital characterization]\label{prop:J-characterization}
For $E\in R(G_c)$ and $t\in T_{\reg}$,
\begin{equation}\label{eq:J-characterization}
  \tau_t^G\bigl(\mathcal J_G(E)\bigr)=\chi_E(t).
\end{equation}
Consequently, $\mathcal J_G$ is injective.  It is the unique additive homomorphism satisfying \eqref{eq:J-characterization} on any dense subset of $T_{\reg}$.
\end{proposition}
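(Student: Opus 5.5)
The plan is to check \eqref{eq:J-characterization} on the basis $\widehat{G_c}$ of $R(G_c)$ and then extend linearly. Both sides of \eqref{eq:J-characterization} are additive in $E$: the left-hand side because $\mathcal J_G$ is defined by linear extension and $\tau_t^G$ is additive on $K_0(\Cr G)$, the right-hand side because characters are additive on $R(G_c)$. So it suffices to treat $E=E_\lambda$ with $\lambda$ regular dominant and $\lambda-\rho\in X^*(T)$. For such $E_\lambda$, \eqref{eq:J-definition} and \eqref{eq:natural-class-character} give
\[
  \tau_t^G\bigl(\mathcal J_G(E_\lambda)\bigr)
  =\varepsilon_G\sum_{\pi\in\Pi_\lambda(G)}\tau_t^G([\pi])
  =\varepsilon_G\sum_{\pi\in\Pi_\lambda(G)}\Theta_\pi(t),
\]
and Shelstad's identity \eqref{eq:shelstad-compact} identifies this with $\chi_{E_\lambda}(t)$ for every $t\in T_{\reg}$. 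This proves \eqref{eq:J-characterization}. The only bookkeeping point is that every element of $\widehat{G_c}$ equals some $E_\lambda$ via Borel--Weil--Bott, as recorded after \eqref{eq:packet-cosets}. Hence $\mathcal J_G$ is well defined on all of $R(G_c)$.

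For injectivity, suppose $\mathcal J_G(E)=0$. Then \eqref{eq:J-characterization} gives $\chi_E(t)=0$ for all $t\in T_{\reg}$. Since $T_{\reg}$ is dense in $T$ and $\chi_E$ is continuous, $\chi_E$ vanishes on the maximal torus $T$ of $G_c$. Every element of $G_c$ is conjugate into $T$, so $\chi_E\equiv0$. Characters separate classes in $R(G_c)$, so $E=0$. This argument uses only the compact side; no analytic input is needed here.

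For uniqueness, let $\mathcal J':R(G_c)\to K_0(\Cr G)$ be another additive homomorphism satisfying \eqref{eq:J-characterization} for $t$ in some dense subset $D\subset T_{\reg}$. For each $E$, the difference $x=\mathcal J_G(E)-\mathcal J'(E)$ has $\tau_t^G(x)=0$ for all $t\in D$. \Cref{thm:separation} then gives $x=0$, so $\mathcal J'=\mathcal J_G$.

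I expect no serious obstacle; the substantive content is entirely in the cited inputs \eqref{eq:natural-class-character} and \eqref{eq:shelstad-compact}. The one point needing care is that these are used pointwise on all of $T_{\reg}$, not merely on $T^\dagger$. If one prefers to rely only on the fixed-point formulas, one can run the argument on $T^\dagger$ using \eqref{eq:natural-class-index} and \cref{cor:stable-index}-type identities instead. By the uniqueness clause, that version characterizes the same map, so either route suffices.
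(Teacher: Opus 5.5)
Your proof is correct and follows the paper's own argument: you verify \eqref{eq:J-characterization} on each $E_\lambda$ via \eqref{eq:J-definition}, \eqref{eq:natural-class-character} and \eqref{eq:shelstad-compact}, extend by linearity, deduce injectivity from the vanishing of $\chi_E$ on $T_{\reg}$, and obtain uniqueness from \cref{thm:separation}. Your closing aside is not needed. If you did pursue it, the input for ordinary traces would be \eqref{eq:index-comparison} rather than the stable identity of \cref{cor:stable-index}, and you would also have to express the class of every packet member, not just $[\pi_\lambda]$, as a Dolbeault index.
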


\begin{proof}
For an irreducible $E_\lambda$, equations \eqref{eq:J-definition}, \eqref{eq:natural-class-character}, and \eqref{eq:shelstad-compact} give \eqref{eq:J-characterization}; the general case follows by linearity.  If $\mathcal J_G(E)=0$, then $\chi_E$ vanishes on $T_{\reg}$, so $E=0$.  If two homomorphisms have the stated trace property on a dense subset, their values differ by a $K$-class whose elliptic orbital traces vanish there.  \Cref{thm:separation} then gives uniqueness.
\end{proof}

Combining \cref{thm:C-map,prop:J-characterization}, the two maps can be read directly from traces:
\[
  \chi_{\mathcal C_G(x)}=\tau^{\mathrm{st},G}(x),
  \qquad
  \tau^G(\mathcal J_G(E))=\chi_E.
\]
The first formula averages before recognizing a compact character; the second
chooses the unique \(K\)-class whose ordinary traces already equal that
character.  The order of these operations is the source of the multiplier.

\subsection{The two trace-selected lattices}

Define
\begin{equation}\label{eq:S-U-definition}
  S_G=\im\mathcal J_G,
  \qquad
  U_G=\ker\mathcal C_G.
\end{equation}
We call $S_G$ the \emph{stable L-packet lattice} and $U_G$ the \emph{stable-orbital kernel}.  The terminology records two different trace conditions.  A class in $U_G$ need not have zero ordinary orbital traces; its traces cancel only after stable summation.

\begin{proposition}[Intrinsic descriptions]\label{prop:intrinsic}
The subgroups in \eqref{eq:S-U-definition} satisfy
\begin{align}
  S_G
  &=\left\{x\in K_0(\Cr G):
      \tau_t^G(x)=\chi_E(t)\text{ on }T^\dagger
      \text{ for some }E\in R(G_c)\right\},\label{eq:S-intrinsic}\\
  U_G
  &=\left\{x\in K_0(\Cr G):
      \tau_t^{\st,G}(x)=0\text{ for every }t\in T^\dagger\right\}.
      \label{eq:U-intrinsic}
\end{align}
\end{proposition}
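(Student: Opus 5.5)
Both descriptions come from proving two inclusions, using the trace characterizations already established: \eqref{eq:C-characterization} for \(\mathcal C_G\) and \eqref{eq:J-characterization} for \(\mathcal J_G\).

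For \eqref{eq:U-intrinsic}, suppose first that \(x\in U_G\). Then \(\mathcal C_G(x)=0\), so \cref{thm:C-map} gives \(\tau_t^{\st,G}(x)=\chi_0(t)=0\) for every \(t\in T^\dagger\). Conversely, suppose the stable traces of \(x\) vanish on \(T^\dagger\). Then \eqref{eq:C-characterization} shows that the compact virtual character \(\chi_{\mathcal C_G(x)}\) vanishes on the dense set \(T^\dagger\subset T\). This character is continuous and class-invariant, so it vanishes on all of \(T\). Every element of \(G_c\) is conjugate into \(T\), so the character vanishes identically, and hence \(\mathcal C_G(x)=0\). This is the same density argument used for well-definedness in \cref{thm:C-map}.

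For \eqref{eq:S-intrinsic}, suppose \(x=\mathcal J_G(E)\). Then \cref{prop:J-characterization} gives \(\tau_t^G(x)=\chi_E(t)\) on all of \(T_{\reg}\), and in particular on \(T^\dagger\). Conversely, suppose \(x\) satisfies \(\tau_t^G(x)=\chi_E(t)\) on \(T^\dagger\) for some \(E\in R(G_c)\). Consider the difference \(y=x-\mathcal J_G(E)\). By \eqref{eq:J-characterization}, its ordinary orbital traces vanish for every \(t\in T^\dagger\). Since \(T^\dagger\) is dense in \(T_{\reg}\), \cref{thm:separation} gives \(y=0\), so \(x=\mathcal J_G(E)\in S_G\).

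The only delicate point is the density bookkeeping. \Cref{thm:separation} needs the traces to vanish on a dense subset of \(T_{\reg}\), which \(T^\dagger\) is by construction. In the \(U_G\) argument, the conclusion must pass from \(T^\dagger\) to all of \(G_c\), which uses continuity of compact characters together with the fact that \(T\) meets every conjugacy class of \(G_c\). Neither step requires new input beyond the results stated earlier in the paper. As a consistency check, the description of \(S_G\) shows that the representation \(E\) is determined by \(x\), since a character is determined by its values on \(T^\dagger\); this matches the injectivity of \(\mathcal J_G\).
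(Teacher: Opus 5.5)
Your proposal is correct and follows the paper's proof. The description of $U_G$ comes from the trace characterization \eqref{eq:C-characterization}, where you spell out the continuity-and-density step that the paper leaves implicit; the description of $S_G$ uses \eqref{eq:J-characterization} for one inclusion and \cref{thm:separation} applied to $x-\mathcal J_G(E)$ for the other.
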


\begin{proof}
The description of $U_G$ is \eqref{eq:C-characterization}.  One inclusion in \eqref{eq:S-intrinsic} follows from \eqref{eq:J-characterization}.  Conversely, if $\tau_t^G(x)=\chi_E(t)$ on $T^\dagger$, then $x$ and $\mathcal J_G(E)$ have the same orbital traces on a dense subset.  Apply \cref{thm:separation}.
\end{proof}

\begin{remark}[Dependence on choices]\label{rem:choices}
The positive roots enter the operators $Q_G(\nu)$ and the labeling of $E_\lambda$ and $\Pi_\lambda(G)$.  The characterizations \eqref{eq:C-characterization} and \eqref{eq:J-characterization} show that the resulting maps do not depend on that labeling.  They do depend on the fixed identification of the elliptic Cartan data and on compatible choices of orbital measures.  We use ``canonical'' only relative to those choices.
\end{remark}

\section{L-packet multiplicity and the integral defect}\label{sec:defect}

We now close the two constructions into a single calculation.  At this point the analytic work is finished: the proof is the same diagonal-versus-sum calculation as in \cref{sec:lattice-model}, with Weyl cosets replacing the \(m\) coordinates.

\begin{theorem}[L-packet multiplicity formula]\label{thm:CJ}
For every $E\in R(G_c)$,
\begin{equation}\label{eq:CJ}
  \mathcal C_G\mathcal J_G(E)=m_GE.
\end{equation}
\end{theorem}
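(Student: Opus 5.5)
By additivity of both maps it suffices to treat a single class $E\in R(G_c)$, and in fact the argument works for arbitrary $E$ without reducing to irreducibles. The plan is to compare characters on the dense set $T^\dagger$. Then a virtual character of $G_c$ that vanishes on a dense subset of the maximal torus is zero, so equality of characters there gives equality in $R(G_c)$. This is the same rigidity already used in the proof of \cref{thm:C-map}.

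Fix $t\in T^\dagger$. By the characterization \eqref{eq:C-characterization} applied to $x=\mathcal J_G(E)$,
\[
  \chi_{\mathcal C_G\mathcal J_G(E)}(t)=\tau_t^{\st,G}\bigl(\mathcal J_G(E)\bigr)
  =\sum_{[w]\in W_G/W_K}\tau_{wtw^{-1}}^G\bigl(\mathcal J_G(E)\bigr),
\]
where the second equality is \eqref{eq:stable-orbital-sum}. Since $T^\dagger\subset T_{\reg}$ is $W_G$-invariant, each $wtw^{-1}$ again lies in $T_{\reg}$. We may therefore apply \cref{prop:J-characterization} term by term, which gives $\tau_{wtw^{-1}}^G(\mathcal J_G(E))=\chi_E(wtw^{-1})$. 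Here $w$ is represented in $N_{G_c}(T)$ via $W_G\cong N_{G_c}(T)/T$, and the conjugation is computed inside the compact group $G_c$.

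The key step is that $\chi_E$ is a class function on $G_c$, so $\chi_E(wtw^{-1})=\chi_E(t)$ for every $w\in W_G$. This uses the full absolute Weyl group, not only $W_K$. Each of the $m_G$ cosets then contributes $\chi_E(t)$, and
\[
  \chi_{\mathcal C_G\mathcal J_G(E)}(t)=m_G\,\chi_E(t)=\chi_{m_GE}(t)\qquad(t\in T^\dagger).
\]
Density of $T^\dagger$ in $T$, continuity of characters, and the fact that a virtual character is determined by its restriction to $T$ then give \eqref{eq:CJ}.

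The one delicate point is bookkeeping in the identification of the conjugation action. In \eqref{eq:stable-orbital-sum}, $wtw^{-1}$ denotes the action of $W_G$ on $T$. We need this to be the same action for which $\chi_E$ is invariant, namely conjugation by $N_{G_c}(T)$. The fixed inner identification of $T$ as a common maximal torus is what guarantees this. I expect no further obstacle: all the analytic content is already packaged in \cref{thm:C-map} and \cref{prop:J-characterization}, and the remaining argument is the diagonal-then-sum computation of \cref{prop:lattice-model}.
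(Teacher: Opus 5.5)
Your proposal is correct and matches the paper's proof essentially step for step: evaluate the character of $\mathcal C_G\mathcal J_G(E)$ on $T^\dagger$ via \eqref{eq:C-characterization}, expand with \eqref{eq:stable-orbital-sum}, apply \eqref{eq:J-characterization} to each term, then use the $W_G$-invariance of $\chi_E$ and density. Your extra remark, that the $W_G$-action on $T$ must agree with conjugation by $N_{G_c}(T)$, makes explicit a point the paper leaves implicit in its fixed inner identification.
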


\begin{proof}
Let $t\in T^\dagger$.  Using \eqref{eq:C-characterization}, \eqref{eq:stable-orbital-sum}, and \eqref{eq:J-characterization}, we obtain
\begin{align*}
  \chi_{\mathcal C_G\mathcal J_G(E)}(t)
  &=\tau_t^{\st,G}\bigl(\mathcal J_G(E)\bigr)\\
  &=\sum_{[w]\in W_G/W_K}
    \tau_{wtw^{-1}}^G\bigl(\mathcal J_G(E)\bigr)\\
  &=\sum_{[w]\in W_G/W_K}\chi_E(wtw^{-1}).
\end{align*}
The character $\chi_E$ is $W_G$-invariant on $T$, so the last expression is $m_G\chi_E(t)$.  Equality on the dense set $T^\dagger$ implies equality in $R(G_c)$.
\end{proof}

The formula has a simple interpretation.  The L-packet lift replaces one compact character by a \(K\)-class having the same ordinary elliptic trace.  Stable descent then counts that character once for every ordinary conjugacy class inside its stable class.  The L-packet size is therefore forced by the geometry of conjugacy; it is not a normalization that can be removed while preserving both trace characterizations.

One might try to replace \(\mathcal C_G\) by \(m_G^{-1}\mathcal C_G\).  This produces an inverse to \(\mathcal J_G\) on the stable L-packet lattice, but it is generally not integral on the ambient \(K\)-group.  The next theorem measures the failure precisely rather than hiding it in a normalization.

\begin{theorem}[Integral defect sequence]\label{thm:defect}
The intersection $S_G\cap U_G$ is zero.  Moreover,
\[
  \overline{\mathcal C}_G:K_0(\Cr G)\longrightarrow R(G_c)/m_GR(G_c),
  \qquad
  x\longmapsto\mathcal C_G(x)\bmod m_GR(G_c),
\]
fits into a short exact sequence
\begin{equation}\label{eq:defect-sequence}
  0\longrightarrow S_G\oplus U_G
  \longrightarrow K_0(\Cr G)
  \overset{\overline{\mathcal C}_G}{\longrightarrow}
  R(G_c)/m_GR(G_c)
  \longrightarrow0.
\end{equation}
\end{theorem}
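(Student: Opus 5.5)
The argument is the proof of \cref{prop:lattice-model} carried over word for word, with the abstract relation \(cj=m\,\mathrm{id}_B\) replaced by \cref{thm:CJ}. Two inputs are needed: \(\mathcal C_G\mathcal J_G=m_G\,\mathrm{id}\), and the freeness, hence torsion-freeness, of \(R(G_c)\), which has \(\mathbb Z\)-basis \(\widehat{G_c}\). No further analytic input is required beyond what was used to construct the two maps.

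\emph{Step 1: trivial intersection.} Let \(x\in S_G\cap U_G\) and write \(x=\mathcal J_G(E)\). Then
\[
  0=\mathcal C_G(x)=\mathcal C_G\mathcal J_G(E)=m_GE
\]
by \cref{thm:CJ}. Since \(R(G_c)\) is torsion-free and \(m_G\geq1\), this forces \(E=0\), and hence \(x=0\). Consequently the sum \(S_G+U_G\) is direct, and the natural map \(S_G\oplus U_G\to K_0(\Cr G)\), \((s,u)\mapsto s+u\), is injective.

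\emph{Step 2: surjectivity of \(\overline{\mathcal C}_G\).} This follows at once from the surjectivity of \(\mathcal C_G\) in \cref{thm:C-map}, composed with the quotient map \(R(G_c)\to R(G_c)/m_GR(G_c)\).

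\emph{Step 3: exactness in the middle.} First, the image of \(S_G\oplus U_G\) lies in the kernel. Indeed, \(\mathcal C_G(\mathcal J_G(E)+u)=m_GE\) for \(u\in U_G\), and this lies in \(m_GR(G_c)\). Conversely, suppose \(\overline{\mathcal C}_G(x)=0\). Then \(\mathcal C_G(x)=m_GE\) for some \(E\in R(G_c)\). Put \(u=x-\mathcal J_G(E)\). Using \cref{thm:CJ},
\[
  \mathcal C_G(u)=\mathcal C_G(x)-\mathcal C_G\mathcal J_G(E)=m_GE-m_GE=0,
\]
so \(u\in U_G\). Therefore \(x=\mathcal J_G(E)+u\in S_G+U_G\).

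There is no genuine obstacle here. The only point needing care is that Step 1 requires torsion-freeness of \(R(G_c)\), and not of \(K_0(\Cr G)\). All the substantive work, namely the well-definedness of \(\mathcal C_G\) and the multiplier identity, was already done in \cref{thm:C-map,thm:CJ}.
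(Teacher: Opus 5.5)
Your proposal is correct and follows the paper's proof essentially step for step. Both arguments get the trivial intersection from \(\mathcal C_G\mathcal J_G=m_G\,\mathrm{id}\) and the torsion-freeness of \(R(G_c)\), and both take surjectivity of \(\overline{\mathcal C}_G\) from that of \(\mathcal C_G\). Exactness in the middle is then obtained in both by writing \(\mathcal C_G(x)=m_GE\) and observing \(x-\mathcal J_G(E)\in U_G\).
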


\begin{proof}
If $\mathcal J_G(E)\in U_G$, then \cref{thm:CJ} gives $m_GE=0$.  The group $R(G_c)$ is free abelian on the irreducible representations of $G_c$, hence torsion-free; therefore $E=0$.  This proves $S_G\cap U_G=0$.

The map $\overline{\mathcal C}_G$ is surjective because $\mathcal C_G$ is.  Both $U_G$ and $S_G$ lie in its kernel, the latter by \cref{thm:CJ}.  Conversely, if $\mathcal C_G(x)=m_GE$, then
\[
  \mathcal C_G\bigl(x-\mathcal J_G(E)\bigr)=0,
\]
so $x-\mathcal J_G(E)\in U_G$.  Hence $x\in S_G+U_G$, and the sum is direct by the first paragraph.
\end{proof}

Because $R(G_c)$ is free on $\widehat{G_c}$, the cokernel has the explicit form
\begin{equation}\label{eq:defect-direct-sum}
  R(G_c)/m_GR(G_c)
  \cong\bigoplus_{E\in\widehat{G_c}}\Z/m_G\Z.
\end{equation}
Thus each compact irreducible contributes one copy of the same finite defect.  Along a single irreducible direction, the situation is exactly the lattice model with \(B=\mathbb Z\): \(S_G\) plays the role of the diagonal lattice, \(U_G\) the sum-zero lattice, and one residue class modulo \(m_G\) remains.  The statement concerns these two subgroups selected by orbital traces.  It does not rule out unrelated abstract complements in the ambient group \(K_0(\Cr G)\).

\begin{corollary}[Stable projector and obstruction primes]\label{cor:projector}
After inverting $m_G$,
\begin{equation}\label{eq:localized-splitting}
  K_0(\Cr G)\otimes\Z[1/m_G]
  =\bigl(S_G\otimes\Z[1/m_G]\bigr)
   \oplus
   \bigl(U_G\otimes\Z[1/m_G]\bigr).
\end{equation}
The projector onto the first summand is
\begin{equation}\label{eq:stable-projector}
  P_G^{\st}=\frac1{m_G}\mathcal J_G\mathcal C_G.
\end{equation}
It is the unique rational projector with image $S_G\otimes\Q$ and kernel $U_G\otimes\Q$.

For a prime $\ell$, the corresponding decomposition over $\Z_{(\ell)}$ holds if and only if $\ell\nmid m_G$.  In particular, if $m_G>1$, no integral projector has image $S_G$ and kernel $U_G$.
\end{corollary}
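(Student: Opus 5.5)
The plan is to deduce everything from \cref{thm:CJ} and \cref{thm:defect}, exactly as in the lattice model of \cref{prop:lattice-model}, tensoring with the flat rings $\Z[1/m_G]$, $\Z_{(\ell)}$ and $\Q$.

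\emph{Rational splitting and the projector.} Tensoring the exact sequence \eqref{eq:defect-sequence} with $\Z[1/m_G]$ preserves exactness because $\Z[1/m_G]$ is flat. The quotient $R(G_c)/m_GR(G_c)\otimes\Z[1/m_G]$ vanishes since $m_G$ annihilates it and is invertible. This gives \eqref{eq:localized-splitting}. Over $\Z[1/m_G]$ define $P=\frac1{m_G}\mathcal J_G\mathcal C_G$. By \cref{thm:CJ},
\[
  P^2=\tfrac1{m_G^2}\mathcal J_G(\mathcal C_G\mathcal J_G)\mathcal C_G=P .
\]
Moreover $P\mathcal J_G(E)=\mathcal J_G(E)$, so the image of $P$ is $S_G\otimes\Z[1/m_G]$. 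The kernel of $P$ contains $U_G\otimes\Z[1/m_G]$. Conversely, if $Px=0$, then injectivity of $\mathcal J_G$ after localization (it is injective with torsion-free source, and localization is flat) gives $\mathcal C_G(x)=0$, so the kernel is exactly $U_G\otimes\Z[1/m_G]$. Uniqueness is formal: a projector is determined by its image and kernel once these are complementary, and complementarity over $\Q$ follows from the same flat base change.

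\emph{The prime-by-prime statement.} If $\ell\nmid m_G$, then $m_G$ is a unit in $\Z_{(\ell)}$, and the same argument applies verbatim. For the converse, suppose $\ell\mid m_G$. Localizing \eqref{eq:defect-sequence} at $\ell$ leaves the cokernel
\[
  R(G_c)/m_GR(G_c)\otimes\Z_{(\ell)}\cong\bigoplus_{E}\Z/\ell^{v_\ell(m_G)}\Z,
\]
which is nonzero because $\widehat{G_c}$ is nonempty (it contains the trivial representation). Hence $S_G\otimes\Z_{(\ell)}+U_G\otimes\Z_{(\ell)}$ is a proper subgroup, so no decomposition exists.

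\emph{Main obstacle.} The step needing care is the final sentence. An integral projector with image $S_G$ and kernel $U_G$ would force $K_0(\Cr G)=S_G\oplus U_G$. This contradicts the nonzero cokernel $R(G_c)/m_GR(G_c)$ in \eqref{eq:defect-sequence} whenever $m_G>1$. Concretely, surjectivity of $\mathcal C_G$ provides $x$ with $\mathcal C_G(x)=[\mathbf 1]$, the trivial representation. Such an $x$ is not in $S_G+U_G$, since otherwise $[\mathbf 1]\in m_GR(G_c)$, which fails by freeness of $R(G_c)$ on $\widehat{G_c}$. I expect no analytic difficulty here. The only subtlety is to phrase the projector's image and kernel inside $K_0\otimes\Z[1/m_G]$, or $K_0\otimes\Q$, without assuming $K_0(\Cr G)$ is torsion-free. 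Where torsion-freeness is needed, I would use that $K_0(\Cr G)$ is generated by classes detected by orbital traces (\cref{thm:separation}), so it embeds in a $\C$-vector space of functions.
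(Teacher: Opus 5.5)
Your proposal is correct and follows essentially the paper's own argument: you base-change the defect sequence \eqref{eq:defect-sequence} flatly to $\Z[1/m_G]$, $\Z_{(\ell)}$ or $\Q$, obtain idempotence of $P_G^{\st}$ from \cref{thm:CJ}, and use nonvanishing of the localized cokernel when $\ell\mid m_G$. The differences are cosmetic. You get uniqueness from the general fact that complementary image and kernel determine a projector, whereas the paper computes $P(x)=m_G^{-1}\mathcal J_G\mathcal C_G(x)$ directly. Your torsion-freeness caveat is harmless but unnecessary, since flatness alone makes the localized subgroups embed.
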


\begin{proof}
By \cref{thm:CJ},
\[
  (P_G^{\st})^2
  =\frac1{m_G^2}\mathcal J_G
    (\mathcal C_G\mathcal J_G)\mathcal C_G
  =P_G^{\st}.
\]
It is the identity on $S_G$ and zero on $U_G$.  Exactness of \eqref{eq:defect-sequence} gives \eqref{eq:localized-splitting}.

If $P$ is any rational projector with this image and kernel, then for $x\in K_0(\Cr G)\otimes\Q$ write $P(x)=\mathcal J_G(E)$ with $E\in R(G_c)\otimes\Q$.  Since $x-P(x)\in U_G$,
\[
  \mathcal C_G(x)=\mathcal C_G(P(x))=m_GE,
\]
so $P(x)=m_G^{-1}\mathcal J_G\mathcal C_G(x)$.  This proves uniqueness.

Localizing \eqref{eq:defect-sequence} at $\ell$ shows that the two lattices exhaust the local $K$-group exactly when multiplication by $m_G$ is invertible in $\Z_{(\ell)}$, namely when $\ell\nmid m_G$.  If $\ell\mid m_G$, the localized quotient of the nonzero free group $R(G_c)$ remains nonzero.
\end{proof}

\subsection{The compact-dual Euler characteristic}

Let
\[
  \mathfrak g=\mathfrak k\oplus\mathfrak p
\]
be the Cartan decomposition, and let
\[
  \mathfrak g^d=\mathfrak k\oplus i\mathfrak p
\]
be its compact dual Lie algebra.  Choose compact connected groups $K^d\subset G^d$ with Lie algebras $\mathfrak k\subset\mathfrak g^d$ and a common maximal torus, again denoted $T$; one may pass to a compatible finite central cover to arrange these data.  Put $X^d=G^d/K^d$.  Finite central isogenies do not change the Weyl groups, so the Weyl groups of $(G^d,T)$ and $(K^d,T)$ are $W_G$ and $W_K$.  The Hopf--Samelson formula therefore gives
\begin{equation}\label{eq:euler-characteristic}
  \chi(X^d)=\frac{|W_G|}{|W_K|}=m_G;
\end{equation}
see Hopf--Samelson \cite{HopfSamelson1940}; the compact-dual construction is reviewed in \cite[Chapters~V and~X]{Helgason2001}.

Equation \eqref{eq:euler-characteristic} is not used in the proof of \cref{thm:CJ}.  It explains why the same integer is robust: L-packet cardinality, the number of Weyl fixed-point blocks in the index comparison, and the Euler characteristic of the compact dual all count the same Weyl quotient.  Consequently, the primes obstructing the stable integral splitting are exactly the primes dividing this Euler characteristic.

\section{Stable transfer between inner forms}\label{sec:transfer}

Let \(G_1\) and \(G_2\) be equal-rank real forms in the same inner class.  Fix compatible compact Cartan identifications, orbital measures, and a common compact form \(G_c\).  There are two superficially similar ways to return from \(K_0(\Cr G_i)\) to compact data.  The map \(\mathcal C_i\) is stable averaging and multiplies a L-packet lift by \(m_i\); the inverse \(\mathcal J_i^{-1}\) is defined integrally only on the L-packet lattice \(S_i\).  Integral transfer therefore uses \(\mathcal J_i^{-1}\), not \(m_i^{-1}\mathcal C_i\) on the whole \(K\)-group.  Write
\[
  \mathcal C_i=\mathcal C_{G_i},\quad
  \mathcal J_i=\mathcal J_{G_i},\quad
  S_i=S_{G_i},\quad
  U_i=U_{G_i},\quad
  m_i=[W_G:W_{K_i}].
\]

\begin{theorem}[Integral stable transfer]\label{thm:stable-transfer}
There is a unique isomorphism
\[
  \mathcal T^{\st}_{1\to2}:S_1\overset{\sim}{\longrightarrow}S_2
\]
satisfying
\begin{equation}\label{eq:stable-transfer-definition}
  \mathcal T^{\st}_{1\to2}\bigl(\mathcal J_1(E)\bigr)
  =\mathcal J_2(E)
  \qquad(E\in R(G_c)).
\end{equation}
For a regular parameter $\lambda$,
\begin{equation}\label{eq:packet-transfer}
  \mathcal T^{\st}_{1\to2}
  \left(
    \varepsilon_{G_1}\sum_{\pi\in\Pi_\lambda(G_1)}[\pi]
  \right)
  =
    \varepsilon_{G_2}\sum_{\pi'\in\Pi_\lambda(G_2)}[\pi'].
\end{equation}
For three compatible inner forms,
\begin{equation}\label{eq:transfer-composition}
  \mathcal T^{\st}_{2\to3}\mathcal T^{\st}_{1\to2}
  =\mathcal T^{\st}_{1\to3}.
\end{equation}
\end{theorem}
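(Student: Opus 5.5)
The plan is to define the transfer as \(\mathcal T^{\st}_{1\to2}=\mathcal J_2\circ\mathcal J_1^{-1}\) on \(S_1\) and then check each claim formally.

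First, by \cref{prop:J-characterization}, each \(\mathcal J_i:R(G_c)\to K_0(\Cr G_i)\) is an injective additive homomorphism. It therefore corestricts to an isomorphism \(R(G_c)\xrightarrow{\sim}S_i=\im\mathcal J_i\). Hence \(\mathcal T^{\st}_{1\to2}:=\mathcal J_2\circ(\mathcal J_1|^{S_1})^{-1}\) is a composite of two group isomorphisms, so it is an isomorphism \(S_1\to S_2\). It satisfies \eqref{eq:stable-transfer-definition} by construction. Uniqueness holds because every element of \(S_1\) has the form \(\mathcal J_1(E)\), so \eqref{eq:stable-transfer-definition} prescribes the map on all of \(S_1\).

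Equation \eqref{eq:packet-transfer} is the special case \(E=E_\lambda\). Here \(\mathcal J_i([E_\lambda])\) is the signed L-packet sum on \(G_i\) by \cref{def:J-map}, equivalently \eqref{eq:intro-packet}.

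The one point needing care, and the main obstacle, is that the label \(E_\lambda\) refers to the same element of \(R(G_c)\) for both forms. Both \(\mathcal J_1\) and \(\mathcal J_2\) are defined using the same compact form \(G_c\), the same torus \(T\), and the same positive system \(R^+\subset R(\mathfrak g_{\C},\mathfrak t_{\C})\). Since \(G_1,G_2\) are inner forms with compatible Cartan identifications, \(R^+\) and \(\lambda\) are common to both. To avoid dependence on labeling, I would argue intrinsically: by \eqref{eq:J-characterization}, \(\mathcal T^{\st}_{1\to2}(x)\) is the unique \(y\in K_0(\Cr G_2)\) whose ordinary elliptic traces on \(T^\dagger\) equal those of \(x\), namely \(\chi_E\). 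This is justified by \cref{prop:intrinsic} and \cref{thm:separation} applied on \(G_2\). This intrinsic description is independent of \(R^+\) by \cref{rem:choices}, and it makes the statement depend only on the fixed common Cartan data.

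Finally, \eqref{eq:transfer-composition} follows by evaluating both sides on \(\mathcal J_1(E)\). Both sides give \(\mathcal J_3(E)\), and since these elements exhaust \(S_1\), the two maps agree. Alternatively, the composition law is immediate from \(\mathcal J_3\mathcal J_2^{-1}\mathcal J_2\mathcal J_1^{-1}=\mathcal J_3\mathcal J_1^{-1}\).
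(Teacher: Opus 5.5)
Your proposal is correct and matches the paper's proof: the paper likewise uses injectivity of each \(\mathcal J_i\) to define \(\mathcal T^{\st}_{1\to2}=\mathcal J_2\mathcal J_1^{-1}\), and reads off the packet formula and the composition law directly. Your extra trace-theoretic characterization via \cref{thm:separation} is what the paper records right after the theorem as \eqref{eq:transfer-trace}; it is consistent with the statement but not needed for the proof.
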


\begin{proof}
Each $\mathcal J_i$ is an isomorphism from $R(G_c)$ onto $S_i$.  Define
\[
  \mathcal T^{\st}_{1\to2}=\mathcal J_2\mathcal J_1^{-1}.
\]
The L-packet formula and the composition law are immediate.
\end{proof}

If $t_1\in T\subset G_1$ and $t_2\in T\subset G_2$ are corresponding regular elements under the fixed Cartan identification, then \eqref{eq:J-characterization} gives
\begin{equation}\label{eq:transfer-trace}
  \tau_{t_2}^{G_2}\bigl(\mathcal T^{\st}_{1\to2}(x)\bigr)
  =\tau_{t_1}^{G_1}(x),
  \qquad x\in S_1.
\end{equation}
Thus \cref{thm:stable-transfer} is characterized by ordinary elliptic orbital traces on the stable L-packet lattice.  It transfers the stable object as a whole; it does not choose a preferred member of a L-packet.

There is a useful extension to the ambient rational $K$-groups.

\begin{proposition}[Rational extension]\label{prop:rational-transfer}
Define
\begin{equation}\label{eq:rational-transfer}
  \widetilde{\mathcal T}_{1\to2}
  =\frac1{m_1}\mathcal J_2\mathcal C_1:
  K_0(\Cr G_1)\otimes\Q
  \longrightarrow
  K_0(\Cr G_2)\otimes\Q.
\end{equation}
Then $\widetilde{\mathcal T}_{1\to2}$ agrees with $\mathcal T^{\st}_{1\to2}$ on $S_1$, vanishes on $U_1$, and satisfies
\[
  \widetilde{\mathcal T}_{2\to3}
  \widetilde{\mathcal T}_{1\to2}
  =\widetilde{\mathcal T}_{1\to3}.
\]
Moreover,
\[
  \widetilde{\mathcal T}_{G\to G}=P_G^{\st}.
\]
\end{proposition}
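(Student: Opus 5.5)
The plan is to verify each of the four claims directly from \cref{thm:CJ}, \cref{thm:stable-transfer}, and \cref{cor:projector}. All maps are extended $\Q$-linearly to $K_0(\Cr G_i)\otimes\Q$ and $R(G_c)\otimes\Q$.

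\emph{Agreement on $S_1$.} Take $x=\mathcal J_1(E)$ with $E\in R(G_c)$. By \cref{thm:CJ} applied to $G_1$, $\mathcal C_1\mathcal J_1(E)=m_1E$. Hence
\[
\widetilde{\mathcal T}_{1\to2}(x)=\tfrac1{m_1}\mathcal J_2(m_1E)=\mathcal J_2(E)=\mathcal T^{\st}_{1\to2}(x),
\]
where the last equality is \eqref{eq:stable-transfer-definition}.

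\emph{Vanishing on $U_1$.} This is immediate, since $U_1=\ker\mathcal C_1$.

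\emph{Composition law.} I will compute
\[
\widetilde{\mathcal T}_{2\to3}\widetilde{\mathcal T}_{1\to2}
=\tfrac1{m_2m_1}\mathcal J_3(\mathcal C_2\mathcal J_2)\mathcal C_1
=\tfrac1{m_2m_1}\mathcal J_3\, m_2\,\mathcal C_1
=\tfrac1{m_1}\mathcal J_3\mathcal C_1 ,
\]
which is $\widetilde{\mathcal T}_{1\to3}$. The middle step uses \cref{thm:CJ} for $G_2$, extended rationally; this is legitimate because $\mathcal C_2\mathcal J_2=m_2\,\mathrm{id}$ holds on $R(G_c)$ and therefore also after tensoring with $\Q$.

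\emph{The case $G_1=G_2=G$.} Here $m_1=m_G$, so $\widetilde{\mathcal T}_{G\to G}=\frac1{m_G}\mathcal J_G\mathcal C_G$. This is precisely $P_G^{\st}$ as defined in \eqref{eq:stable-projector}.

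The only step needing care is bookkeeping rather than a real obstacle. The composition law involves the same common compact form $G_c$ for all three inner forms, so that $\mathcal J_2$ and $\mathcal C_2$ have matching source and target $R(G_c)$. This is guaranteed by the standing compatibility assumptions at the start of \cref{sec:transfer}, and I will note it explicitly.
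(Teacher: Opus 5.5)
Your proposal is correct and follows the paper's proof essentially step for step. You get agreement on $S_1$ from $\mathcal C_1\mathcal J_1=m_1$ (\cref{thm:CJ}), vanishing on $U_1=\ker\mathcal C_1$, the composition law by collapsing $\mathcal C_2\mathcal J_2=m_2$, and $\widetilde{\mathcal T}_{G\to G}=P_G^{\st}$ by definition, with your remark about the common compact form $G_c$ making explicit what the paper leaves to its standing assumptions.
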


\begin{proof}
For $E\in R(G_c)$,
\[
  \widetilde{\mathcal T}_{1\to2}\mathcal J_1(E)
  =\frac1{m_1}\mathcal J_2\mathcal C_1\mathcal J_1(E)
  =\mathcal J_2(E)
\]
by \cref{thm:CJ}; vanishing on $U_1$ is immediate.  For composition,
\begin{align*}
  \widetilde{\mathcal T}_{2\to3}
  \widetilde{\mathcal T}_{1\to2}
  &=\frac1{m_1m_2}
    \mathcal J_3\mathcal C_2\mathcal J_2\mathcal C_1\\
  &=\frac1{m_1}\mathcal J_3\mathcal C_1.
\end{align*}
The last assertion is \eqref{eq:stable-projector}.
\end{proof}

The identity endomorphism in this rational transfer system is the stable projector, not the identity of the full $K$-group.  This distinction is essential: elliptic stable characters control the summand $S_G$, but contain no information that would reconstruct $U_G$ across different inner forms.

\section{The rank-one calculation}\label{sec:sl2}

Let
\[
  G=\SL(2,\R),
  \qquad K=T=\SO(2),
  \qquad G_c=\SU(2).
\]
Then
\[
  W_G\cong\Z/2\Z,
  \qquad W_K=1,
  \qquad m_G=2,
  \qquad q(G)=1.
\]
Let
\[
  z=e^\rho\in R(T)=\Z[z,z^{-1}],
  \qquad
  s=z+z^{-1}.
\]
The Weyl involution is the ring automorphism
\[
  \iota:R(T)\longrightarrow R(T),
  \qquad \iota(z)=z^{-1},
\]
and
\[
  R(\SU(2))=\Z[s].
\]
For this isotropy representation the spin double cover admits the standard lift, so the spin representation group used in Dirac induction may be identified with $R(T)$.  Let
\[
  D:R(T)\overset{\sim}{\longrightarrow}K_0(\Cr\SL(2,\R))
\]
be the resulting Dirac-induction isomorphism.  For $t\in T^\dagger$, the rank-one instance of the orbital-trace formula is
\begin{equation}\label{eq:sl2-orbital}
  \tau_t^G(Dv)
  =-\frac{v(t)}{z(t)-z(t)^{-1}},
  \qquad v\in R(T).
\end{equation}
This is the specialization of \cite[Theorem~3.2]{HochsWang2019Orbital}; the corresponding fixed-point character calculation is displayed in \cite[Section~3.5]{HochsWang2018Shelstad}.

\subsection{The character identity as a trigonometric formula}

Write
\[
  t_\theta=
  \begin{pmatrix}
    \cos\theta&-\sin\theta\\
    \sin\theta&\cos\theta
  \end{pmatrix}\in T,
  \qquad z(t_\theta)=e^{i\theta}.
\]
Let \(\lambda=n\rho\) with \(n\geq1\).  The compact representation
\(E_\lambda\) has highest weight \((n-1)\rho\), and the Weyl character formula
gives
\begin{equation}\label{eq:sl2-compact-character}
  \chi_{E_\lambda}(t_\theta)
  =\frac{e^{in\theta}-e^{-in\theta}}
        {e^{i\theta}-e^{-i\theta}}
  =\frac{\sin(n\theta)}{\sin\theta}.
\end{equation}
The two noncompact fixed-point contributions are
\begin{align}
  \tau_{t_\theta}^G\bigl(Q_G((n-1)\rho)\bigr)
  &=\frac{e^{in\theta}}{2i\sin\theta},\label{eq:sl2-positive-index}\\
  \tau_{t_\theta}^G\bigl(Q_G(-(n-1)\rho)\bigr)
  &=-\frac{e^{-in\theta}}{2i\sin\theta}.
    \label{eq:sl2-negative-index}
\end{align}
Their sum is \eqref{eq:sl2-compact-character}; this is the explicit rank-one
instance of the stable index identity, worked out in
\cite[Section~3.5]{HochsWang2018Shelstad}.  Since \(q(G)=1\), the natural
discrete-series classes satisfy
\([\pi_{\pm\lambda}]=-Q_G(\pm(\lambda-\rho))\).  Hence the same calculation
becomes Shelstad's signed character identity
\[
  -\Theta_{\pi_\lambda}(t_\theta)
  -\Theta_{\pi_{-\lambda}}(t_\theta)
  =\frac{\sin(n\theta)}{\sin\theta}.
\]
This formula explains analytically why two L-packet members produce one compact
character; the remaining calculation records the integral lattice generated by
those identities.

\subsection{The integral lattice calculation}

\begin{proposition}[The two lattices for $\SL(2,\R)$]\label{prop:sl2}
Under Dirac induction,
\begin{align}
  \mathcal C_G(Dv)
  &=\frac{\iota(v)-v}{z-z^{-1}},\label{eq:sl2-C}\\
  \mathcal J_G(p(s))
  &=-D\bigl((z-z^{-1})p(s)\bigr),\label{eq:sl2-J}\\
  U_G&=D\bigl(\Z[s]\bigr),\label{eq:sl2-U}\\
  S_G&=D\bigl((z-z^{-1})\Z[s]\bigr).\label{eq:sl2-S}
\end{align}
Consequently,
\begin{equation}\label{eq:sl2-quotient}
  \frac{K_0(\Cr\SL(2,\R))}{S_G\oplus U_G}
  \cong(\Z/2\Z)[s].
\end{equation}
\end{proposition}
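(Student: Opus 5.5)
The plan is to transport everything to $R(T)=\Z[z,z^{-1}]$ through the Dirac-induction isomorphism $D$, compute both maps from the rank-one trace formula \eqref{eq:sl2-orbital}, and identify them using the trace characterizations of \cref{thm:C-map} and \cref{prop:J-characterization}. Since $D$ is an isomorphism, every class has the form $Dv$ for a unique $v\in R(T)$, so it suffices to describe $\mathcal C_G\circ D$, $D^{-1}\circ\mathcal J_G$, and the preimages of $U_G$ and $S_G$ in $R(T)$.

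\emph{Formula \eqref{eq:sl2-C}.} Since $W_K=1$, the stable trace is $\tau^{\st,G}_t=\tau^G_t+\tau^G_{t^{-1}}$. Note that $z(t^{-1})=z(t)^{-1}$ and $v(t^{-1})=(\iota v)(t)$, and that $T^\dagger$ is $W_G$-invariant. Applying \eqref{eq:sl2-orbital} at $t$ and at $t^{-1}$ then gives
\[
\tau^{\st,G}_t(Dv)=-\frac{v(t)}{z(t)-z(t)^{-1}}+\frac{(\iota v)(t)}{z(t)-z(t)^{-1}}.
\]
The Laurent polynomial $\iota(v)-v$ is $\iota$-antisymmetric. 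I will check that every antisymmetric element of $\Z[z,z^{-1}]$ lies in $(z-z^{-1})\Z[s]$. This follows by induction on degree from
\[
z^k-z^{-k}=(z-z^{-1})\bigl(z^{k-1}+z^{k-3}+\cdots+z^{-(k-1)}\bigr),
\]
since the second factor is symmetric and hence lies in $\Z[s]$. So the quotient $(\iota v-v)/(z-z^{-1})$ is an element of $\Z[s]=R(\SU(2))$, and its character agrees with $\tau^{\st,G}_t(Dv)$ on $T^\dagger$. The uniqueness part of \cref{thm:C-map} then gives \eqref{eq:sl2-C}.

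\emph{Formula \eqref{eq:sl2-J}.} Plugging $v=-(z-z^{-1})p(s)$ into \eqref{eq:sl2-orbital} gives
\[
\tau^G_t\bigl(-D((z-z^{-1})p(s))\bigr)=p(s)(t)=\chi_{p(s)}(t)\qquad(t\in T^\dagger).
\]
The uniqueness clause of \cref{prop:J-characterization} on the dense set $T^\dagger$ yields \eqref{eq:sl2-J}. As a consistency check, applying \eqref{eq:sl2-C} to this class returns $2p(s)$, in agreement with \cref{thm:CJ}.

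\emph{The lattices and the quotient.} By \eqref{eq:sl2-C} and the fact that multiplication by $z-z^{-1}$ is injective on the domain $R(T)$, we have $Dv\in U_G$ iff $\iota v=v$. The symmetric Laurent polynomials are exactly $\Z[s]$, which gives \eqref{eq:sl2-U}. Equation \eqref{eq:sl2-S} is just the image of \eqref{eq:sl2-J}, since $-(z-z^{-1})\Z[s]=(z-z^{-1})\Z[s]$. For \eqref{eq:sl2-quotient} I would simply invoke \cref{thm:defect} with $m_G=2$:
\[
K_0/(S_G\oplus U_G)\cong R(\SU(2))/2R(\SU(2))=\Z[s]/2\Z[s]\cong(\Z/2\Z)[s].
\]
As a direct check, $\Z[s]\oplus(z-z^{-1})\Z[s]$ is the set of $v$ whose symmetric and antisymmetric parts $\tfrac12(v\pm\iota v)$ are both integral, so it has index $2$ in each degree-$k$ block $\{z^k,z^{-k}\}$.

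The only delicate step is the divisibility claim in the proof of \eqref{eq:sl2-C}: that $(\iota v-v)/(z-z^{-1})$ is an honest element of $\Z[s]$ and not merely a rational function. The explicit factorization of $z^k-z^{-k}$ settles this, but the signs of \eqref{eq:sl2-orbital} and of $\varepsilon_G=-1$ must be tracked consistently, and I would verify the final formulas against \eqref{eq:sl2-positive-index} and \eqref{eq:sl2-negative-index}.
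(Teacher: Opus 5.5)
Your proposal is correct and follows the paper's proof. Both evaluate \eqref{eq:sl2-orbital} at $t$ and $t^{-1}$ to get $\tau^{\st,G}_t(Dv)$, identify $\mathcal C_G$ and $\mathcal J_G$ through their trace characterizations, and read off $U_G$ and $S_G$; you also spell out two points the paper leaves implicit, namely that $z-z^{-1}$ divides $\iota(v)-v$ in $\Z[z,z^{-1}]$ and that $T^\dagger$ is invariant under $t\mapsto t^{-1}$. The only deviation is the quotient: you invoke \cref{thm:defect} with $m_G=2$ plus a parity check (where the $k=0$ block contributes no index), while the paper computes directly from $\Z[z,z^{-1}]=\Z[s]\oplus z\Z[s]$ and $z-z^{-1}=2z-s$; either route is valid.
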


\begin{proof}
Stable conjugacy sends $t$ to $t^{-1}$.  By \eqref{eq:sl2-orbital},
\begin{align*}
  \tau_t^{\st,G}(Dv)
  &=\tau_t^G(Dv)+\tau_{t^{-1}}^G(Dv)\\
  &=-\frac{v(t)}{z(t)-z(t)^{-1}}
    +\frac{\iota(v)(t)}{z(t)-z(t)^{-1}}.
\end{align*}
The quotient $(\iota(v)-v)/(z-z^{-1})$ is Weyl invariant and lies in $\Z[s]$, which proves \eqref{eq:sl2-C}.  Its kernel is the invariant subring $\Z[s]$, proving \eqref{eq:sl2-U}.

If $p(s)\in\Z[s]$, then \eqref{eq:sl2-orbital} gives
\[
  \tau_t^G\left(-D\bigl((z-z^{-1})p(s)\bigr)\right)=p(s)(t).
\]
The characterization of $\mathcal J_G$ proves \eqref{eq:sl2-J} and \eqref{eq:sl2-S}.

Finally,
\[
  \Z[z,z^{-1}]=\Z[s]\oplus z\Z[s],
  \qquad z-z^{-1}=2z-s.
\]
Hence
\[
  \Z[s]+(z-z^{-1})\Z[s]
  =\Z[s]+2z\Z[s],
\]
and the quotient is $(\Z/2\Z)[s]$.
\end{proof}

For a positive integral parameter $\lambda=n\rho$, the L-packet is
\[
  \Pi_\lambda(G)=\{\pi_\lambda,\pi_{-\lambda}\}.
\]
If $E_\lambda$ is the irreducible $\SU(2)$-representation of highest weight $\lambda-\rho$, then
\begin{equation}\label{eq:sl2-packet}
  \mathcal J_G(E_\lambda)
  =-[\pi_\lambda]-[\pi_{-\lambda}],
  \qquad
  \mathcal C_G\mathcal J_G(E_\lambda)=2E_\lambda.
\end{equation}
The calculation shows more than the existence of \(2\)-torsion.  Each irreducible compact character contributes one independent parity obstruction, exactly as predicted by \eqref{eq:defect-direct-sum}.

The first two compact characters make the formulas concrete.  For the trivial
representation and the standard two-dimensional representation of
\(\mathrm{SU}(2)\), whose characters are \(1\) and \(s=z+z^{-1}\),
\begin{align*}
  \mathcal J_G(1)&=-D(z-z^{-1}),\\
  \mathcal J_G(s)&=-D(z^2-z^{-2}).
\end{align*}
In both cases \(\mathcal C_G\mathcal J_G\) doubles the compact character.
The denominator can also be seen on a single ambient class.  Since
\(\mathcal C_G(Dz)=-1\),
\begin{equation}\label{eq:sl2-half-projector}
  P_G^{\mathrm{st}}(Dz)
  =\frac12\mathcal J_G(-1)
  =\frac12D(z-z^{-1}).
\end{equation}
The right-hand side is not integral in \(R(T)=\mathbb Z[z,z^{-1}]\).  Thus the
factor \(1/2\) is not an artifact of the proof: it is already forced by the
simplest Dirac-induction generator outside \(S_G\oplus U_G\).

\section{Symplectic inner forms}\label{sec:symplectic}

Consider the inner class with complex group $\Sp(2n,\C)$.  The real forms and their maximal compact subgroups used below are standard; see \cite[Chapter~X]{Helgason2001}.  The absolute Weyl group is
\[
  W(C_n)\cong(\Z/2\Z)^n\rtimes S_n,
  \qquad |W(C_n)|=2^n n!.
\]
The split, quaternionic, and compact real forms considered below are inner to one another; at the Lie-algebra level this is also reflected by the absence of nontrivial diagram automorphisms in type $C_n$ \cite[Chapter~X]{Helgason2001}.

\begin{proposition}[Symplectic L-packet multipliers]\label{prop:symplectic}
Let $p+q=n$.
\begin{enumerate}[label=\textup{(\roman*)}]
\item For $G=\Sp(2n,\R)$ and $K=U(n)$,
\begin{equation}\label{eq:split-data}
  m_G=2^n,
  \qquad
  q(G)=\frac{n(n+1)}2,
  \qquad
  \varepsilon_G=(-1)^{n(n+1)/2}.
\end{equation}
\item For $G=\Sp(p,q)$ and $K=\Sp(p)\times\Sp(q)$,
\begin{equation}\label{eq:quaternionic-data}
  m_G=\binom np,
  \qquad
  q(G)=2pq,
  \qquad
  \varepsilon_G=1.
\end{equation}
\end{enumerate}
\end{proposition}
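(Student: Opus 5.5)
The proof is a direct computation of three invariants in each case: \(m_G=|W_G|/|W_K|\), \(q(G)=\tfrac12\dim(G/K)\), and \(\varepsilon_G=(-1)^{q(G)}\). Since \(W_G=W(C_n)\) has order \(2^n n!\) for every real form in the inner class, only \(W_K\) and \(\dim K\) change from case to case. We need the Weyl group of \(K\) relative to the common compact Cartan \(T\cong U(1)^n\). Because \(K\) is connected and \(T\) is a maximal torus of \(K\), this is the abstract Weyl group of the compact group \(K\). Its order is then read off from the standard formulas \(|W(U(n))|=n!\) and \(|W(\Sp(k))|=2^k k!\).

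For (i), \(K=U(n)\) gives \(|W_K|=n!\), so \(m_G=2^n n!/n!=2^n\). For the dimension, \(\dim\Sp(2n,\R)=n(2n+1)\) and \(\dim U(n)=n^2\). Hence \(\dim(G/K)=n^2+n\), which gives \(q(G)=n(n+1)/2\) and the stated sign.

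For (ii), \(K=\Sp(p)\times\Sp(q)\) has Weyl group \(W(C_p)\times W(C_q)\), of order \(2^p p!\,2^q q!=2^n p!\,q!\). So \(m_G=2^n n!/(2^n p!\,q!)=\binom np\). For the dimension, \(\dim\Sp(p,q)=n(2n+1)\) and \(\dim K=p(2p+1)+q(2q+1)\). The difference is \(2n^2-2p^2-2q^2=4pq\), so \(q(G)=2pq\) is even and \(\varepsilon_G=1\). As a consistency check, (ii) agrees with the Hopf--Samelson interpretation \eqref{eq:euler-characteristic}: the compact dual is the quaternionic Grassmannian \(\Sp(n)/(\Sp(p)\times\Sp(q))\), whose Euler characteristic is \(\binom np\).

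The only step needing care is identifying \(W_K=N_K(T)/T\) with the abstract Weyl group of \(K\). This requires that the compact Cartan \(T\) of \(G\) is a maximal torus of \(K\) and that \(N_K(T)/T\) injects into \(W_G\) compatibly with this identification. Both hold because \(G\) and \(K\) have equal rank and \(K\) is connected, so I would just cite the standard structure theory (\cite[Chapter~X]{Helgason2001}). The remaining arithmetic is routine.
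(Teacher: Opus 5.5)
Your proposal is correct and follows essentially the same route as the paper: you compute \(m_G=|W(C_n)|/|W_K|\) with \(W_K\cong S_n\) or \(W(C_p)\times W(C_q)\), and \(q(G)\) from the dimension counts \(n(2n+1)-n^2=n(n+1)\) and \(4pq\). Your explicit computation of \(4pq\) and the Hopf--Samelson consistency check are harmless additions to the paper's argument.
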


\begin{proof}
For the split form, $W_K\cong S_n$, so
\[
  [W(C_n):W_K]=\frac{2^n n!}{n!}=2^n.
\]
Moreover,
\[
  \dim\Sp(2n,\R)-\dim U(n)
  =n(2n+1)-n^2=n(n+1),
\]
which gives the value of $q(G)$.

For $\Sp(p,q)$,
\[
  W_K=W(C_p)\times W(C_q),
\]
and therefore
\[
  [W(C_n):W_K]
  =\frac{2^n n!}{(2^pp!)(2^qq!)}
  =\binom np.
\]
The symmetric space has real dimension $4pq$, so $q(G)=2pq$.
\end{proof}

Let $E_\lambda$ be the compact $\Sp(n)$-representation attached to a regular parameter.  The L-packet lifts are therefore
\begin{align}
  \mathcal J_{\Sp(2n,\R)}(E_\lambda)
  &=(-1)^{n(n+1)/2}
    \sum_{\pi\in\Pi_\lambda(\Sp(2n,\R))}[\pi],
    \label{eq:split-packet}\\
  \mathcal J_{\Sp(p,q)}(E_\lambda)
  &=\sum_{\pi\in\Pi_\lambda(\Sp(p,q))}[\pi].
    \label{eq:quaternionic-packet}
\end{align}
The corresponding defect groups are
\begin{equation}\label{eq:symplectic-defects}
  R(\Sp(n))/2^nR(\Sp(n))
  \quad\text{and}\quad
  R(\Sp(n))/\binom np R(\Sp(n)).
\end{equation}
The first is purely $2$-primary.  The second reflects the arithmetic of the binomial coefficient.  In particular, $\Sp(1,2)$ has $m_G=3$, so its stable projector has a genuine $3$-primary integral obstruction.

\subsection{The three real forms in type \texorpdfstring{$C_2$}{C2}}

Type $C_2$ gives a particularly transparent example with three real forms: the regular L-packet sizes are $4$, $2$, and $1$ on the split, quaternionic, and compact forms, respectively.

\begin{table}[ht]
\centering
\caption{L-packet multiplicities and signs in type $C_2$.  The entries are obtained from the Weyl-group and dimension formulas in \cref{prop:symplectic}.}
\label{tab:C2}
\begin{tabular}{@{}llllr@{}}
\toprule
$G$ & $K$ & $m_G$ & $q(G)$ & $\varepsilon_G$\\
\midrule
$\Sp(4,\R)$ & $U(2)$ & $4$ & $3$ & $-1$\\
$\Sp(1,1)$ & $\Sp(1)\times\Sp(1)$ & $2$ & $2$ & $+1$\\
$\Sp(2)$ & $\Sp(2)$ & $1$ & $0$ & $+1$\\
\bottomrule
\end{tabular}
\end{table}

Write
\[
  \Pi_\lambda(\Sp(4,\R))=\{\pi_1,\ldots,\pi_4\},
  \qquad
  \Pi_\lambda(\Sp(1,1))=\{\pi'_1,\pi'_2\}.
\]
Then stable transfer identifies
\begin{equation}\label{eq:C2-transfer}
  -\sum_{i=1}^4[\pi_i]
  \quad\longleftrightarrow\quad
  \sum_{j=1}^2[\pi'_j]
  \quad\longleftrightarrow\quad
  [E_\lambda].
\end{equation}
At the same time,
\begin{align*}
  \mathcal C_{\Sp(4,\R)}
  \left(-\sum_{i=1}^4[\pi_i]\right)&=4[E_\lambda],\\
  \mathcal C_{\Sp(1,1)}
  \left(\sum_{j=1}^2[\pi'_j]\right)&=2[E_\lambda].
\end{align*}
Thus transfer is integral on the L-packet lattices even though the ambient stable projectors require different denominators.  This distinction is exactly what the short exact sequence records.

\subsection{Type \texorpdfstring{$C_3$}{C3}: the first odd obstruction}

For \(n=3\), the inner class contains the split form
\(\Sp(6,\mathbb R)\), the quaternionic form \(\Sp(1,2)\), and the compact
form \(\Sp(3)\).  The Weyl-group data are listed in \cref{tab:C3}.
\begin{table}[H]
\centering
\caption{L-packet multiplicities in type \(C_3\).}
\label{tab:C3}
\begin{tabular}{@{}llllr@{}}
\toprule
\(G\) & \(K\) & \(m_G\) & \(q(G)\) & \(\varepsilon_G\)\\
\midrule
\(\Sp(6,\mathbb R)\) & \(U(3)\) & \(8\) & \(6\) & \(+1\)\\
\(\Sp(1,2)\) & \(\Sp(1)\times\Sp(2)\) & \(3\) & \(4\) & \(+1\)\\
\(\Sp(3)\) & \(\Sp(3)\) & \(1\) & \(0\) & \(+1\)\\
\bottomrule
\end{tabular}
\end{table}
If the corresponding packets are written as
\(\{\pi_1,\ldots,\pi_8\}\) and
\(\{\sigma_1,\sigma_2,\sigma_3\}\), then stable transfer identifies
\[
  \sum_{i=1}^8[\pi_i]
  \quad\longleftrightarrow\quad
  \sum_{j=1}^3[\sigma_j]
  \quad\longleftrightarrow\quad
  [E_\lambda].
\]
The middle L-packet transfers integrally, but its stable descent is
\[
  \mathcal C_{\Sp(1,2)}
  \left(\sum_{j=1}^3[\sigma_j]\right)=3[E_\lambda].
\]
Thus a \(3\)-primary obstruction occurs before any large-rank phenomenon is
involved.  This example also shows why the binomial coefficient in
\eqref{eq:quaternionic-data} carries genuine arithmetic information rather
than merely recording the size of a Weyl group.

\section{Conclusion and perspective}\label{sec:conclusion}

The argument separates into a geometric construction and an elementary lattice calculation.  Stable orbital integration turns noncompact index classes into compact characters, while Shelstad's identity lifts a compact character to a signed L-packet.  Once these maps are placed in opposite directions, every Weyl coset is counted once.  The resulting L-packet multiplicity is precisely the cokernel of the two trace-selected lattices.

The main conclusion is therefore not merely that a stable summand exists over $\Q$.  The sequence
\[
  0\longrightarrow S_G\oplus U_G
  \longrightarrow K_0(\Cr G)
  \longrightarrow R(G_c)/m_GR(G_c)
  \longrightarrow0
\]
identifies where integral splitting fails and shows that the failure is supported precisely at the primes dividing $m_G=\chi(X^d)$.  The rank-one calculation verifies the defect at the lattice level, while the symplectic family shows that odd obstruction primes occur naturally.

The same organization suggests a test for broader transfer constructions.  A candidate correspondence should first identify a family of geometric traces that separates the relevant $K$-theory, and then compare those traces with stable or endoscopic characters.  Outside the elliptic discrete-series setting, ordinary orbital traces alone are generally insufficient; higher cyclic cocycles provide natural additional functionals, and Song and Tang develop higher orbital integrals in a form adapted to noncommutative geometry \cite{SongTang2025}.  On the spectral side, Huang's spinorial description of transfer factors indicates how the required weights might arise from a relative Dirac index \cite{Huang2021}.  Turning these two observations into an endoscopic $KK$-class remains a separate problem.  Any extension compatible with the inner-form discrete-series case would, in particular, have to recover the integral L-packet-level constraint isolated here.


\end{document}